\newcommand{\field}[1]{\mathbb{#1}}
\newcommand{\N}{\field{N}}
\newcommand{\ov}{\overline}
\numberwithin{equation}{section}
\newtheorem{theorem}{Theorem}[section]
\newtheorem{lemma}[theorem]{Lemma}
\newtheorem{proposition}[theorem]{Proposition}
\theoremstyle{remark}
\renewenvironment{proof}[1][Proof]{\begin{trivlist}
\item[\hskip \labelsep {\bfseries #1:}]}{\qed\end{trivlist}}
\title[A generalisation of a second partition theorem of Andrews]{A generalisation of a second partition theorem of Andrews to overpartitions}
\author{Jehanne Dousse}
\address{LIAFA \\
Universite Paris Diderot - Paris 7 \\
75205 Paris Cedex 13 \\
FRANCE }
\email{jehanne.dousse@liafa.univ-paris-diderot.fr}
\begin{document}

\begin{abstract}
In 1968 and 1969, Andrews proved two partition theorems of the Rogers-Ramanujan type which generalise Schur's celebrated partition identity (1926). Andrews' two generalisations of Schur's theorem went on to become two of the most influential results in the theory of partitions, finding applications in combinatorics, representation theory and quantum algebra. In a recent paper, the author generalised the first of these theorems to overpartitions, using a new technique which consists in going back and forth between $q$-difference equations on generating functions and recurrence equations on their coefficients. Here, using a similar method, we generalise the second theorem of Andrews to overpartitions.
\end{abstract}

\maketitle

%
%

\section{Introduction}
A partition of $n$ is a non-increasing sequence of natural numbers whose sum is $n$.
An overpartition of $n$ is a partition of $n$ in which the first occurrence of a number may be overlined.
For example, there are $14$ overpartitions of $4$:
$4$, $\overline{4}$, $3+1$, $\overline{3}+1$, $3+\overline{1}$, $\overline{3}+\overline{1}$, $2+2$, $\overline{2}+2$, $2+1+1$, $\overline{2}+1+1$, $2+\overline{1}+1$, $\overline{2}+\overline{1}+1$, $1+1+1+1$ and $\overline{1}+1+1+1$.

In 1926, Schur~\cite{Schur} proved the following partition identity.

\begin{theorem}[Schur]
\label{schur}
Let $n$ be a positive integer.
Let $D_1(n)$ denote the number of partitions of $n$ into distinct parts congruent to $1$ or $2$ modulo $3$.
Let $E_1(n)$ denote the number of partitions of $n$ of the form $n= \lambda_1 + \cdots + \lambda_s$ where $\lambda_i - \lambda_{i+1} \geq 3$ with strict inequality if $\lambda_{i+1} \equiv 0 \mod 3$.
Then $D_1(n)=E_1(n)$.
\end{theorem}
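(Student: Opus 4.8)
The plan is to prove the identity at the level of generating functions, in the spirit of the method described in the abstract. Write $D(q) = \sum_{n\ge 0} D_1(n)q^n$ and $E(q) = \sum_{n \ge 0} E_1(n) q^n$. The series $D(q)$ is immediate from the definition: partitions into distinct parts taken from the residue classes $1$ and $2$ modulo $3$ are generated by
\[
D(q) = \prod_{k=0}^{\infty}\bigl(1+q^{3k+1}\bigr)\bigl(1+q^{3k+2}\bigr).
\]
Thus everything reduces to showing that the generating function $E(q)$ for the gap-type partitions equals this infinite product.

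The main work is to obtain a tractable description of $E(q)$. First I would refine the count by the number of parts, introducing
\[
E(x,q) = \sum_{\lambda} x^{\ell(\lambda)} q^{|\lambda|},
\]
the sum running over all partitions $\lambda = \lambda_1 + \cdots + \lambda_s$ satisfying the Schur gap conditions, where $\ell(\lambda) = s$ counts the parts. A convenient preliminary normalisation is to subtract a staircase: the map $\lambda_i \mapsto \lambda_i - 3(s-i)$ turns a Schur partition with $s$ parts into an ordinary partition into $s$ positive parts in which the parts divisible by $3$ are forced to be distinct while the other parts are unrestricted, and it multiplies the weight by $q^{3\binom{s}{2}}$. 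This removes the difference-$3$ part of the condition and isolates the genuinely arithmetic constraint coming from the strict inequality at multiples of $3$.

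From here I would derive a $q$-difference equation for $E(x,q)$ by peeling off the extreme part and splitting according to its residue modulo $3$; because the gap condition depends only on the residue of the smaller of two neighbouring parts, this produces a finite linear system relating $E(x,q)$ to $E(xq^{3},q)$. Translating the $q$-difference equation into a recurrence on the coefficients of $E(x,q)$ (and back), one verifies that the product above satisfies the same equation together with the matching initial value $E_1(0)=1$, which forces $E(q)=D(q)$ and hence $D_1(n)=E_1(n)$ for all $n$. I expect the main obstacle to be the correct bookkeeping of the modulo-$3$ strict-inequality condition when setting up the functional equation: it is precisely this interplay between the arithmetic residue condition and the gap condition that makes the system nontrivial to close, and it is the step for which the passage between $q$-difference equations and coefficient recurrences is designed.
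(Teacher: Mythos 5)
The paper does not prove Theorem~\ref{schur} directly: it is quoted as a classical result, and within the paper's framework it is the case $N=3$, $a(1)=1$, $a(2)=2$ (with $k=0$, i.e.\ $d=0$) of Theorems~\ref{andrews} and~\ref{andrews2}, themselves the $d=0$ cases of Theorems~\ref{dousse} and~\ref{dousse2}. Your overall strategy --- the product form for $D(q)$, a refined generating function, a $q$-difference equation obtained by peeling off an extreme part according to its residue modulo $3$, and passage between the functional equation and a coefficient recurrence --- is exactly the method of the paper and of Andrews' original proofs, and your preliminary steps are sound: the product for $D(q)$ is correct, and the staircase map $\lambda_i\mapsto\lambda_i-3(s-i)$ does convert the Schur conditions into ``parts divisible by $3$ distinct, other parts unrestricted'' at the cost of a factor $q^{3\binom{s}{2}}$.

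The gap is in your final step. You cannot conclude by ``verifying that the product satisfies the same equation'': the product $D(q)$ does not depend on $x$, so it cannot satisfy a nontrivial functional equation in $x$; and if you instead intend the two-variable product $\prod_{k\ge 0}(1+xq^{3k+1})(1+xq^{3k+2})$, then the refined identity $E(x,q)=\prod_{k\ge 0}(1+xq^{3k+1})(1+xq^{3k+2})$ is simply false, because Schur's theorem does not preserve the number of parts. For $n=9$ the gap-condition side contains the one-part partition $9$, while every partition of $9$ into distinct parts congruent to $1$ or $2$ modulo $3$ (namely $8+1$, $7+2$, $5+4$) has two parts, so the coefficients of $x^{1}q^{9}$ already disagree. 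What is actually required is to extract $E(1,q)=\sum_n e_n(q)$, equivalently the limit of a sequence governed by the recurrence, from the functional equation; setting $x=1$ only relates $E(1,q)$ to the two-variable function at $x=q^{3}$ and does not close the system. That limiting step is the real content of the proof: in the paper it is carried out by the induction of Theorem~\ref{main} together with Appell's comparison theorem applied to $(1-x)f(x)$ as $x\to 1^{-}$, and in Andrews' original arguments by analysing the recurrence for the generating functions with bounded largest part. Until you supply such an argument, the proof is incomplete; the ``bookkeeping of the strict inequality'' that you flag as the main obstacle is in fact the routine part.
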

For example, for $n=9$, the partitions counted by $D_1(9)$ are $8+1$, $7+2$ and $5+4$ and the partitions counted by $E_1(9)$ are $9$, $8+1$ and $7+2$. Thus $D_1(9)=E_1(9)=3.$

Several proofs of Schur's theorem have been given using a variety of different techniques such as bijective mappings~\cite{Bessenrodt,Bressoud}, the method of weighted words~\cite{Alladi}, and recurrences~\cite{Andrews2,Andrews1,Andrews3}.

Schur's theorem was subsequently generalised to overpartitions by Lovejoy~\cite{Lovejoy} using the method of weighted words. The case $k=0$ corresponds to Schur's theorem.

\begin{theorem}[Lovejoy]
\label{schur_over}
Let $D_1(k,n)$ denote the number of overpartitions of $n$ into parts congruent to $1$ or $2$ modulo $3$ with $k$ non-overlined parts.
Let $E_1(k,n)$ denote the number of overpartitions of $n$ with $k$ non-overlined parts, where parts differ by at least $3$ if the smaller is overlined or both parts are divisible by $3$, and parts differ by at least $6$ if the smaller is overlined and both parts are divisible by $3$.
Then $D_1(k,n)=E_1(k,n)$.
\end{theorem}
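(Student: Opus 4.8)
The plan is to translate the identity into generating functions and prove $\sum_{k,n}E_1(k,n)\,d^kq^n=\sum_{k,n}D_1(k,n)\,d^kq^n$, where the variable $d$ marks the number of non-overlined parts and $q$ marks the size. The right-hand side is immediate from the definition of $D_1$: an overpartition counted by $D_1$ decomposes into its distinct overlined parts (each contributing a factor $1+q^j$) and its freely repeatable non-overlined parts (each contributing a factor $1/(1-dq^j)$), all restricted to $j\equiv 1,2\pmod 3$, so that
\begin{equation*}
\sum_{k,n}D_1(k,n)\,d^kq^n=\prod_{m=1}^{\infty}\frac{(1+q^{3m-2})(1+q^{3m-1})}{(1-d\,q^{3m-2})(1-d\,q^{3m-1})}.
\end{equation*}
The entire difficulty is therefore to show that the generating function of the difference-condition overpartitions counted by $E_1$ equals this infinite product.

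To handle the $E_1$ side I would adopt the $q$-difference/recurrence philosophy announced in the abstract. First I would introduce an auxiliary variable $x$ recording the largest part of an $E_1$-overpartition, and set $f(x)=f(x;d,q)=\sum_\lambda x^{\ell(\lambda)}d^{k(\lambda)}q^{|\lambda|}$, so that the sought generating function is recovered by letting $x\to 1$. Since the gap conditions of $E_1$ depend only on the residue modulo $3$ and on the overlining of the two parts involved, I would split $f$ into components $f_r^{\circ}$ and $f_r^{-}$ indexed by the residue $r\in\{0,1,2\}$ of the largest part and by whether that part is overlined or not. Peeling off the largest part and recording which parts are admissible immediately below it---using the four cases (gap $\geq 3$ when the smaller is overlined or both parts are divisible by $3$, and gap $\geq 6$ when both hold simultaneously, with the looser default otherwise)---produces a finite linear system of $q$-difference equations relating the $f_r^{\pm}(x)$ to their dilates. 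In this system the distinctness of overlined parts generates the numerator factors $1+\cdots$, while the repeatability of non-overlined parts generates the denominator factors $1/(1-d\cdots)$.

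I would then convert this $q$-difference system into recurrences on the series coefficients, solve the recurrence in closed form, and verify that $f(1)$ collapses to the product displayed above; comparing coefficients of $d^kq^n$ then yields $E_1(k,n)=D_1(k,n)$. The main obstacle I anticipate is the bookkeeping of the coupled four-case gap condition: the residue-$0$ parts are precisely the ones that trigger the stronger gaps, so the three residue classes are genuinely intertwined, and one must arrange the $q$-difference system so that the overlined parts divisible by $3$ (which demand a gap of $6$) are neither lost nor double-counted. The second delicate point is the endgame: showing that the explicit solution of the recurrence is \emph{equal} to the product rather than merely a solution of the same functional equation will require pinning down enough initial conditions to invoke a uniqueness argument.
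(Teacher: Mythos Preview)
The paper does not prove Lovejoy's theorem directly; it is quoted as background and then recovered as the special case $N=3$, $a(1)=1$, $a(2)=2$ of the paper's main Theorem~\ref{dousse2}.  So your plan is being measured against the specialisation of that general argument, and in spirit they coincide: compute the $D_1$ product at once, attack the $E_1$ side by conditioning on the largest part, and shuttle between recurrences and $q$-difference equations until the limit collapses to the product.

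Two concrete points separate your sketch from the paper's execution.  First, the r\^ole of $x$.  You say $x$ records the \emph{largest part} and then expect $q$-difference equations in the dilates $f(xq^{c})$; but peeling off the largest part $\lambda_1=m$ leaves a partition whose new largest part may be any admissible value below $m$, so one does not get a closed relation among finitely many dilates of $f$.  What the paper does instead is work with the family $g_m=g_m(d,q)$ of generating functions for $E_1$-overpartitions with largest part $\le m$; conditioning on $\lambda_1$ produces the integer recurrence of Lemma~\ref{lemma1}, and only afterwards is the subsequence $u_\ell=g_{\ell N-a(1)}$ packaged into a power series $\sum_\ell \beta_\ell x^\ell$ on which genuine $q$-difference equations hold.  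Second, your six-way split $f_r^{\pm}$ (residue $r\in\{0,1,2\}$ of the largest part, overlined or not) is heavier than necessary.  The paper splits only by whether the largest part is overlined, into $\pi_m$ and $\phi_m$, and then uses the key simplification $\pi_m(k-1,n)=\phi_m(k,n)$ (overlining the top part is a free choice) to reduce to a single recurrence for $\psi_m=\pi_m+\phi_m$.  With those two adjustments your outline becomes exactly the paper's proof specialised to $r=2$, $N=3$, including your anticipated ``endgame'' of matching initial conditions, which the paper handles by the normalisations $u_0=1$ and $g_{-m}=(-d)^k$.
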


Theorem~\ref{schur_over} was then proved bijectively by Raghavendra and Padmavathamma \cite{Pad}, and using $q$-difference equations and recurrences by the author~\cite{Dousse}.

Andrews extended the ideas of his proofs of Schur's theorem to prove two much more general theorems on partitions with difference conditions~\cite{Generalisation2,Generalisation1}. But before stating these results in their full generality we need to introduce some notation.
Let $A=\lbrace a(1), ..., a(r) \rbrace$ be a set of $r$ distinct integers such that $\sum_{i=1}^{k-1} a(i) < a(k)$ for all $1 \leq k \leq r$ and the $2^r -1$ possible sums of distinct elements of $A$ are all distinct. We denote this set of sums by $A'=\lbrace \alpha(1), ..., \alpha(2^r -1) \rbrace$, where $\alpha(1) < \cdots < \alpha(2^r-1)$. Let us notice that $\alpha(2^k)=a(k+1)$ for all $0 \leq k \leq r-1$ and that any $\alpha$ between $a(k)$ and $a(k+1)$ has largest summand $a(k)$.
Let $N$ be a positive integer with $N \geq \alpha(2^r-1) = a(1) +\cdots+a(r).$ We further define $\alpha(2^r)=a(r+1)=N+a(1).$ Let $A_N$ denote the set of positive integers congruent to some $a(i) \mod N$, $-A_N$ the set of positive integers congruent to some $-a(i) \mod N$, $A'_N$ the set of positive integers congruent to some $\alpha(i) \mod N$ and $-A'_N$ the set of positive integers congruent to some $-\alpha(i) \mod N.$ Let $\beta_N(m)$ be the least positive residue of $m \mod N$. If $\alpha \in A'$, let $w(\alpha)$ be the number of terms appearing in the defining sum of $\alpha$ and $v(\alpha)$ the smallest $a(i)$ appearing in this sum.

To illustrate these notations in the remainder of this paper, it might be useful to consider the example where $a(k)=2^{k-1}$ for $1 \leq k \leq r$ and $\alpha(k)=k$ for $1 \leq k \leq 2^r-1$.

We are now able to state Andrews' generalisations of Schur's theorem.

\begin{theorem}[Andrews]
\label{andrews}
Let $D(A_N;n)$ denote the number of partitions of $n$ into distinct parts taken from $A_N$. Let $E(A'_N;n)$ denote the number of partitions of $n$ into parts taken from $A'_N$ of the form $n=\lambda_1+\cdots+ \lambda_s$, such that
\[\lambda_i - \lambda_{i+1} \geq N w(\beta_N(\lambda_{i+1}))+v(\beta_N(\lambda_{i+1}))-\beta_N(\lambda_{i+1}).\]
Then $D(A_N;n)= E(A'_N;n)$.
\end{theorem}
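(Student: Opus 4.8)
The plan is to recast the combinatorial identity $D(A_N;n)=E(A'_N;n)$ as an identity of generating functions, and to prove the latter by Andrews' technique of $q$-difference equations paired with recurrences on their coefficients, which is precisely the back-and-forth method highlighted in the abstract. The distinct-parts side is immediate: since $A_N$ consists of the positive integers congruent to some $a(i)$ modulo $N$, one has
\[
\sum_{n\geq 0} D(A_N;n)\,q^n=\prod_{i=1}^{r}\prod_{m\geq 0}\bigl(1+q^{a(i)+mN}\bigr).
\]
The whole problem is therefore to show that the generating function of the $E$-partitions equals this product. To capture the difference side I would introduce an auxiliary variable and work with a refined generating function: write $f_\alpha(x)$, with coefficients in $\Z[[q]]$, for the sum of $x^{\ell}q^{|\lambda|}$ over those $E$-partitions whose largest part is congruent to $\alpha \pmod N$, where $x$ marks the ``level'' $\lceil \lambda_1/N\rceil$ of the largest part. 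Because the minimal gap above a part depends only on the residue of the smaller part, splitting by residue class in $A'$ is what makes the recursion close.

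Next I would derive the governing $q$-difference equation by peeling off the largest part. Removing $\lambda_1\equiv\alpha(i)\pmod N$ leaves an $E$-partition whose largest part must sit at least $Nw(\alpha(i))+v(\alpha(i))-\alpha(i)$ below $\lambda_1$; substituting $q^N x$ for $x$ records the drop of one level, while the additive term $v(\alpha(i))-\alpha(i)$ together with the weight $w(\alpha(i))$ produces an explicit linear relation among the $f_\alpha$. Here the structural facts recorded just after the definition of $A'$ are essential: that $\alpha(2^k)=a(k+1)$, and that every $\alpha$ lying between $a(k)$ and $a(k+1)$ has largest summand $a(k)$. These let the residue-dependent gaps telescope, so that the system of relations for the $f_\alpha$ collapses to a single $q$-difference equation for the full generating function $f(x)=\sum_\alpha f_\alpha(x)$.

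I would then turn this $q$-difference equation into a recurrence for the coefficients of $f(x)$ viewed as a power series in $x$: writing $f(x)=\sum_{j\geq 0} c_j(q)\,x^j$, the functional equation becomes a relation expressing $c_j(q)$ in terms of $c_{j-1}(q)$, with an explicit $q$-power factor assembled from $N$, $w$ and $v$. Solving this recurrence in closed form and summing over $j$ yields a product; the final step is to check that this product is exactly $\prod_{i=1}^{r}\prod_{m\geq 0}\bigl(1+q^{a(i)+mN}\bigr)$ and that the initial condition $c_0(q)=1$, coming from the empty partition, matches. Equating coefficients of $q^n$ then gives $E(A'_N;n)=D(A_N;n)$.

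The main obstacle is the middle step: setting up generating functions that faithfully encode the residue-dependent difference conditions, and then proving that the resulting $q$-difference equation coincides with the one satisfied by the product. The combinatorial weights $w(\alpha)$ and $v(\alpha)$, together with the need to control all $2^r-1$ distinct sums simultaneously, make the bookkeeping in the derivation of the $q$-difference equation delicate; verifying that the telescoping of the gaps is consistent across every residue class --- rather than merely in the illustrative case $a(k)=2^{k-1}$ --- is where the real work lies. Once the $q$-difference equation is correctly established, the passage to the recurrence and the identification of the product are routine.
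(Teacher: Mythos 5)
Your overall strategy --- peel off the largest part to obtain a system of relations indexed by the residues in $A'$, collapse it using the facts that $\alpha(2^k)=a(k+1)$ and that any $\alpha$ between $a(k)$ and $a(k+1)$ has largest summand $a(k)$, and then match the result against $\prod_{i=1}^{r}(-q^{a(i)};q^N)_\infty$ --- is the right family of ideas: it is how Andrews proved this theorem, and it is the method this paper adapts for its overpartition analogues. But as written your proposal is a plan rather than a proof: the two steps that constitute essentially all of the work are asserted rather than carried out, and one of them is described incorrectly.

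First, the collapse of the residue-indexed system to a single usable relation is the delicate combinatorial step (in this paper the analogue is the passage from Lemma 2.2, through the telescoping sums over $\alpha<a(k)$ and $a(k)\leq\alpha<a(k+1)$, to Lemma 2.4), and you acknowledge but do not perform it. Second, and more seriously, the resulting recurrence is \emph{not} a two-term relation expressing $c_j$ in terms of $c_{j-1}$ alone. Writing $u_\ell$ for the generating function restricted to parts at most $\ell N-a(1)$, the relation one actually obtains has order $r$: it involves $u_{\ell-1},\dots,u_{\ell-r}$ with coefficients built from $q$-binomial coefficients (compare $(\mathrm{rec}_{N,r})$ in this paper, whose specialisation $d=0$ is the recurrence relevant to the non-overlined case). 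Only for $r=1$ is it two-term and solvable in closed form. For general $r$ there is no closed form for $u_\ell$; one must instead evaluate $\lim_{\ell\to\infty}u_\ell$ by an induction on $r$ that requires a chain of nontrivial transformations of the recurrence and of the associated $q$-difference equations (here, Lemmas 3.2--3.7), together with a limiting device such as Appell's comparison theorem to pass from the transformed sequences back to $u_\ell$. Declaring that ``the passage to the recurrence and the identification of the product are routine'' misplaces the difficulty: that is where most of the proof lives, and the proposal gives no indication of how to execute it.
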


\begin{theorem}[Andrews]
\label{andrews2}
Let $F(-A_N;n)$ denote the number of partitions of $n$ into distinct parts taken from $-A_N$. Let $G(-A'_N;n)$ denote the number of partitions of $n$ into parts taken from $-A'_N$ of the form $n=\lambda_1+\cdots+ \lambda_s$, such that
\begin{displaymath}
\lambda_i - \lambda_{i+1} \geq N w(\beta_N(-\lambda_{i+1}))+v(\beta_N(-\lambda_{i+1}))-\beta_N(-\lambda_{i+1}),
\end{displaymath}
and
\begin{math} \lambda_s \geq N (w(\beta_N(-\lambda_s)-1). \end{math}
Then $F(-A_N;n)= G(-A'_N;n)$.
\end{theorem}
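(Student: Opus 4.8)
The plan is to prove the identity at the level of generating functions, exploiting the interplay between $q$-difference equations and coefficient recurrences described in the introduction, in close parallel with the proof of Theorem~\ref{andrews}. Indeed Theorem~\ref{andrews2} is the ``negative'' counterpart of Theorem~\ref{andrews}, with the residues $-a(i)$ and $-\alpha(j)$ replacing $a(i)$ and $\alpha(j)$; the one genuinely new feature is the boundary condition $\lambda_s \ge N\bigl(w(\beta_N(-\lambda_s))-1\bigr)$ on the smallest part, which has no analogue in the first theorem.

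First I would dispose of the left-hand side. The parts counted by $F(-A_N;n)$ are distinct, and each is a positive integer congruent to some $-a(i)\bmod N$; since $0<a(i)<N$ for every $i$ (because $N\ge a(1)+\cdots+a(r)$), the positive integers in the class of $-a(i)$ are precisely the $jN-a(i)$ with $j\ge 1$, and distinct classes do not overlap. Hence
\[
\sum_{n\ge 0}F(-A_N;n)\,q^n=\prod_{i=1}^{r}\prod_{j\ge 1}\bigl(1+q^{jN-a(i)}\bigr),
\]
and the whole problem reduces to showing that the generating function for $G(-A'_N;n)$ equals this product.

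For the right-hand side I would introduce a refinement. Write $d(m)=Nw(\beta_N(-m))+v(\beta_N(-m))-\beta_N(-m)$ for the minimal gap that the difference condition imposes above a part equal to $m$, and form the two-variable series $f(x,q)=\sum_{\pi}x^{\ell(\pi)}q^{|\pi|}$, where the sum runs over the partitions $\pi$ counted by $G$ and $\ell(\pi)$ denotes the number of parts. Peeling off the largest part $\lambda_1$ — which ranges over the elements of $-A'_N$ lying at least $d(\lambda_2)$ above the largest part $\lambda_2$ of the remaining, still admissible, partition — yields a $q$-difference equation for $f$ in which the $2^r-1$ residue classes $-\alpha(j)$ each contribute a separate term, reflecting the way $w$, $v$, $\beta_N$ and the modulus $N$ enter the gap $d$. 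Translating this $q$-difference equation into relations among the coefficients of $x^mq^n$ then produces a system of recurrences indexed by the classes $\alpha(j)$, which I would solve.

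The two sides are finally matched by checking that the coefficients of the product $\prod_{i,j}\bigl(1+q^{jN-a(i)}\bigr)$ satisfy the same recurrences. Here the base cases are decisive, and this is where I expect the main difficulty to lie: the boundary hypothesis $\lambda_s\ge N\bigl(w(\beta_N(-\lambda_s))-1\bigr)$ is exactly what determines the admissible smallest parts, hence the initial data of the recurrence, so that its unique solution is the above product rather than some shifted variant. The delicate bookkeeping is therefore twofold — deriving the correct $q$-difference equation when $d(m)$ depends on the residue of the \emph{smaller} part (so that the $2^r-1$ cases are coupled), and verifying that the smallest-part condition feeds into the recurrences as precisely the right initial conditions. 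It is for this last point that passing back and forth between the $q$-difference equation (convenient for recognising the product) and the recurrence on coefficients (convenient for imposing the boundary condition) does the real work.
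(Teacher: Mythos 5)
Your outline follows the right general strategy: it is essentially the strategy of Andrews' original proof and of this paper's proof of the overpartition refinement (Theorem~\ref{dousse2}), of which the present statement is the case $k=0$, i.e.\ $d=0$ in all the generating functions, so that within this paper the efficient route is simply to specialise Theorem~\ref{main}. Your computation of $\sum_n F(-A_N;n)q^n=\prod_{i=1}^r(-q^{N-a(i)};q^N)_{\infty}$ is correct. But as a proof the proposal has a genuine gap, and the device you set up does not match the difficulty you yourself identify. With $f(x,q)=\sum_{\pi}x^{\ell(\pi)}q^{|\pi|}$, the data $(\ell(\pi),|\pi|)$ of the partition left after removing $\lambda_1$ does not determine whether re-attaching a given $\lambda_1$ is legal: legality is a bound on the \emph{largest part} of the remainder, of a size depending on the residue class of that largest part modulo $N$. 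Hence peeling the largest part cannot yield a functional equation in $f(xq^{c},q)$ alone, and no relation among the coefficients of $x^mq^n$ encodes the step. (This is unlike the uniform-gap Rogers--Ramanujan situation, where a staircase subtraction converts the bound on the extreme part into the number of parts; here the gaps are residue-dependent, and indeed the number of parts is not even equidistributed between the two sides.) The correct catalytic index is a bound on the largest part within each class $-\alpha(j)\bmod N$, giving a family of $2^r$ coupled generating functions --- the paper's $g_{jN-\alpha(m)}$ of Section~2 --- with the smallest-part hypothesis $\lambda_s\ge N(w(\beta_N(-\lambda_s))-1)$ entering exactly as the initial values $g_{-m}$.

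Even granting that corrected setup, all of the substance is still deferred. One must uncouple the system \eqref{eqf1} into the single $(r+1)$-term recurrence \eqref{rec} (Lemma~\ref{conj}), and then prove by induction on $r$ --- via the back-and-forth between recurrences on coefficients and $q$-difference equations on their generating functions carried out in Lemmas~\ref{lemmau} through~\ref{equalfg} --- that the solution with the prescribed initial data tends to $\prod_{i=1}^r(-q^{N-a(i)};q^N)_{\infty}$. ``Checking that the coefficients of the product satisfy the same recurrences'' is precisely where the work lies (the product does not visibly satisfy the coupled system; one needs the uncoupling, the auxiliary sequences $\beta_m$, $s_n$, $\mu_n$, and Appell's comparison theorem to pass to the limit), and nothing in the proposal carries any of it out.
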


Not only have Andrews' identities led to a number of important developments in combinatorics~\cite{Alladi1,Corteel,Yee} but they also play a natural role in group representation theory~\cite{AndrewsOlsson} and quantum algebra~\cite{Oh}.

The author generalised Theorem~\ref{andrews} to overpartitions in~\cite{Doussegene} by proving the following.

\begin{theorem}
\label{dousse}
Let $D(A_N;k,n)$ denote the number of overpartitions of $n$ into parts taken from $A_N$, having $k$ non-overlined parts. Let $E(A'_N;k,n)$ denote the number of overpartitions of $n$ into parts taken from $A'_N$ of the form $n=\lambda_1+\cdots+ \lambda_s$, having $k$ non-overlined parts, such that
\[\lambda_i - \lambda_{i+1} \geq N w\left(\beta_N(\lambda_{i+1}) -1 +\chi(\ov{\lambda_{i+1}}) \right)+v(\beta_N(\lambda_{i+1}))-\beta_N(\lambda_{i+1}),\]
where $\chi(\ov{\lambda_{i+1}})=1$ if $\lambda_{i+1}$ is overlined and $0$ otherwise.

Then $D(A_N;k,n)= E(A'_N;k,n)$.
\end{theorem}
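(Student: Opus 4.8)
The plan is to prove the identity at the level of generating functions, by showing that the two bivariate series
\[
P(d,q) = \sum_{k,n \ge 0} D(A_N;k,n)\, d^k q^n, \qquad G(d,q) = \sum_{k,n \ge 0} E(A'_N;k,n)\, d^k q^n
\]
coincide. The series $P$ is immediate from the combinatorial description: an overpartition into parts from $A_N$ with $k$ non-overlined parts is assembled independently in each part-size $m \in A_N$, an optional overlined copy contributing a factor $1+q^m$ and arbitrarily many non-overlined copies contributing $1/(1-dq^m)$, so
\[
P(d,q) = \prod_{m \in A_N} \frac{1+q^m}{1 - d q^m}.
\]
All the work therefore goes into computing $G$.

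To get a handle on $G$ I would refine the count of $E$-type overpartitions according to the residue class $\beta_N(\lambda_s) \in A'$ and the overline status of the smallest part $\lambda_s$, since the difference condition is governed entirely by the smaller of two consecutive parts. For each $\alpha(i) \in A'$ and each $\epsilon \in \{0,1\}$ I would introduce a generating function $G_{i,\epsilon}(d,q)$ for those $E$-type overpartitions whose smallest part lies in the class $\alpha(i) \bmod N$ with overline status $\epsilon$, together with the "empty-tail" functions needed to prime the recursion. Building a partition from the bottom up — prepending a new smallest part below an existing configuration, which conveniently determines its own admissible gap
\[
Nw\!\left(\beta_N(\lambda_{s}) - 1 + \chi(\ov{\lambda_s})\right) + v(\beta_N(\lambda_s)) - \beta_N(\lambda_s)
\]
to the previous smallest part — yields a finite coupled system of $q$-difference equations relating the $G_{i,\epsilon}$ to shifted copies of $G$. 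Summing the system gives a single functional equation for $G(d,q)$ in which the $q$-shifts encode exactly the difference conditions, while the two values of $\epsilon$ contribute, respectively, the optional overlined copy and the $d$-weighted non-overlined copies of the smallest part.

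From the functional equation I would extract a recurrence for the coefficients. Writing $G(d,q) = \sum_n g_n(d) q^n$ and reading off the coefficient of $q^n$ converts each shift $q^{\,\cdot}$ into an index shift, producing a recurrence expressing $g_n(d)$ through the $g_{n'}(d)$ with $n' < n$, with coefficients built from $N$, $w$, $v$ and the $\alpha(i)$. In parallel, the product $P$ satisfies an elementary $q$-difference relation reflecting its product structure over the arithmetic progressions $A_N$, which I would likewise turn into a recurrence for its coefficients $p_n(d)$. The proof then reduces to checking that $p_n(d)$ and $g_n(d)$ obey the \emph{same} recurrence and agree for small $n$; an induction on $n$ finishes the argument, and reading off the coefficient of $d^k q^n$ gives $D(A_N;k,n) = E(A'_N;k,n)$.

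The main obstacle will be two-fold. First, deriving the $q$-difference system correctly: the difference condition entangles the arithmetic quantities $w(\alpha)$, $v(\alpha)$ and $\beta_N$, and the overpartition setting forces the genuinely new term $\chi(\ov{\lambda_s})$ inside $w$, so each case — smallest part overlined or not, and each residue class $\alpha(i)$ — must be tracked separately, and the bookkeeping of how prepending a part changes both the size and the admissible gap for the next part is delicate. Second, and most technically demanding, is showing that the recurrence coming from the $E$-side functional equation collapses onto the recurrence satisfied by the product $P$; this is where the combinatorial relations among the $\alpha(i)$, $w$ and $v$ — in particular $\alpha(2^k) = a(k+1)$ and the largest-summand structure of the $\alpha$'s — must be exploited, and where the shift by $-1+\chi$ in the argument of $w$ has to be matched against the extra $(1+q^m)$ factors carried by the overlined parts on the product side. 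Once the two recurrences are identified, the remainder is routine.
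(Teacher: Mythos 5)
Your computation of the product side is fine, and conditioning on the smaller of two consecutive parts is consistent with how the difference condition is stated; but there is a genuine gap at the heart of the plan. The functions $G_{i,\epsilon}(d,q)$, indexed only by the residue class and overline status of the smallest part, do not carry enough information to close the recursion. When you prepend a new smallest part you must know how much room there is below the current smallest part, i.e.\ its actual value and not merely its class modulo $N$; equivalently, the family of generating functions must be indexed by a bound on the extremal part (this is the role of the $\psi_m(k,n)$ and $g_m$ of Section 2 in the analogous argument for Theorem~\ref{dousse2}, and of the auxiliary variable $x$ in~\cite{Doussegene}). Without that extra index the ``shifted copies of $G$'' you invoke are not available: for a two-variable series $\sum_n g_n(d)q^n$, substituting $q \mapsto cq$ multiplies $g_n$ by $c^n$ rather than shifting the index $n$, so the claimed conversion of the functional equation into a recurrence for $g_n(d)$ does not go through. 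The correct mechanism is to introduce $x$ (or the bound $m$), obtain a $q$-difference equation involving $f\left(xq^{mN}\right)$, and extract a recurrence by reading off coefficients of $x^n$.

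The second, larger gap is the endgame. You propose to verify that the coefficients of the product $P$ satisfy the same recurrence as those of $G$ and conclude by induction on $n$, presenting this as routine; it is in fact the hard part of the theorem, and no known proof proceeds by matching recurrences in $n$. In the paper's method the $E$-side yields an order-$r$ recurrence $(\mathrm{rec}_{N,r})$ in the index $\ell$ of $u_{\ell}=g_{\ell N-a(1)}$ with $\ell$-dependent coefficients, and its identification with the infinite product is proved by induction on $r$: a chain of substitutions (the analogues of Lemmas~\ref{lemmau}--\ref{equalfg}) converts a solution of $(\mathrm{rec}_{N,r})$ into a solution of $(\mathrm{rec}_{N,r-1})$, the two intermediate $q$-difference equations are matched coefficient by coefficient using $q$-binomial identities such as~\eqref{pascal2} and~\eqref{identity}, and the limit $\ell \rightarrow \infty$ is extracted with Appell's comparison theorem. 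Nothing in your outline supplies a substitute for this induction on $r$, and there is no evident common recurrence in $n$ satisfied by both $p_n(d)$ and $g_n(d)$, so the final step as described would fail.
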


Here we generalise Theorem~\ref{andrews2} by showing the following.

\begin{theorem}
\label{dousse2}
Let $F(-A_N;k,n)$ denote the number of overpartitions of $n$ into parts taken from $-A_N$, having $k$ non-overlined parts. Let $G(-A'_N;k,n)$ denote the number of overpartitions of $n$ into parts taken from $-A'_N$ of the form $n=\lambda_1+\cdots+ \lambda_s$, having $k$ non-overlined parts, such that
\begin{displaymath}
\lambda_i - \lambda_{i+1} \geq N w\left(\beta_N(\lambda_{i+1}) -1 +\chi(\ov{\lambda_{i+1}}) \right)+v(\beta_N(\lambda_{i+1}))-\beta_N(\lambda_{i+1}),
\end{displaymath}
\begin{displaymath}
\lambda_s \geq N (w(\beta_N(-\lambda_s))-1).
\end{displaymath}

Then $F(-A_N;k,n)= G(-A'_N;k,n)$.
\end{theorem}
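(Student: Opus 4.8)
The plan is to prove the nontrivial direction by showing that the two variable generating function for the $G$-overpartitions coincides with the generating function for the $F$-overpartitions, the latter being immediate. Indeed, since overlined parts are distinct and non-overlined parts may repeat,
\[\sum_{k,n \ge 0} F(-A_N;k,n)\, z^k q^n = \prod_{m \in -A_N} \frac{1+q^m}{1-zq^m}.\]
It therefore suffices to prove that $\sum_{k,n} G(-A'_N;k,n)\, z^k q^n$ equals this same product. Following the companion paper, I would not attempt a direct bijection but instead encode the difference conditions as a $q$-difference equation and pass back and forth to a recurrence on the coefficients.

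First I would introduce a refined generating function $f(x)=f(x,z,q)$ for the $G$-side in which $z$ marks the number of non-overlined parts and an auxiliary variable $x$ records the largest part. Conditioning on the residue class $-\alpha(j) \bmod N$ of the largest part and on whether that part is overlined, the difference condition
\[\lambda_i - \lambda_{i+1} \ge N\,w\!\left(\beta_N(\lambda_{i+1}) - 1 + \chi(\ov{\lambda_{i+1}})\right) + v(\beta_N(\lambda_{i+1})) - \beta_N(\lambda_{i+1})\]
prescribes, for each residue and overline status, the exact minimal gap down to the next part. Peeling off the largest part then relates $f(x)$ to $f(xq^N)$ with residue-dependent coefficients, yielding a system of $q$-difference equations indexed by the $\alpha(j)$. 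Because the gap in the first inequality has exactly the same shape as in Theorem~\ref{dousse}, the ``bulk'' of this system should be formally identical to the one already analysed there.

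I would then expand $f(x) = \sum_n a_n(z,q)\, x^n$, translate the $q$-difference equations into a linear recurrence for the coefficients $a_n(z,q)$, and solve it; more efficiently, I would guess the closed form from the target product and verify that it satisfies both the recurrence and the initial conditions, after which specialising $x$ identifies $f$ with the required infinite product and extracting the coefficient of $z^k q^n$ finishes the proof.

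The main obstacle, and the feature that genuinely separates Theorem~\ref{dousse2} from Theorem~\ref{dousse}, is the additional minimal-part condition $\lambda_s \ge N\big(w(\beta_N(-\lambda_s)) - 1\big)$. This constraint leaves the bulk recurrence coming from the difference conditions untouched; instead it restricts the admissible smallest part and so modifies the initial conditions of the coefficient recurrence. I expect the delicate point to be checking that these modified initial conditions are precisely the boundary correction demanded by the negative-residue setting, so that the recurrence reproduces $\prod_{m \in -A_N}(1+q^m)/(1-zq^m)$ and not some neighbouring series; dropping the condition would yield the wrong product. A secondary bookkeeping difficulty is the coupling between the overline marker $\chi(\ov{\lambda_{i+1}})$ and the weight $w$ inside the gap, which must be carried through the $q$-difference equations so that the variable $z$ counting non-overlined parts enters the recurrence correctly.
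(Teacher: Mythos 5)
Your overall framework---compute the $F$-side generating function as $\prod_{j=1}^{r}\frac{(-q^{N-a(j)};q^N)_\infty}{(dq^{N-a(j)};q^N)_\infty}$, encode the $G$-side difference conditions by conditioning on the largest part, and go back and forth between $q$-difference equations and coefficient recurrences---is the right one, and it is the strategy the paper follows. Your observation that the minimal-part condition $\lambda_s \geq N(w(\beta_N(-\lambda_s))-1)$ enters only through the initial conditions is also correct: in the paper it is exactly what fixes $u_{-k}=(-d)^k$ for $0\leq k\leq r-1$. However, there is a genuine gap at the decisive step. Peeling off the largest part does not produce a two-term relation between $f(x)$ and $f(xq^N)$; after aggregating over a full period of residues one obtains an $(r+1)$-term recurrence $(\mathrm{rec}_{N,r})$ in $\ell$ for $u_\ell=g_{\ell N-a(1)}$, with coefficients built from $q$-binomial coefficients and sums over the $\alpha$'s of fixed weight. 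Your proposed resolution---``guess the closed form from the target product and verify that it satisfies the recurrence and the initial conditions''---fails here, because for $r\geq 2$ the truncated quantities $u_\ell$ have no product closed form; only the limit $\lim_{\ell\to\infty}u_\ell$ equals the target product, and one cannot verify a limit against a recurrence term by term. (Only in the base case $r=1$ does one get $u_\ell=\frac{(-q^{N-a(1)};q^N)_\ell}{(dq^{N-a(1)};q^N)_\ell}$ explicitly.)

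What is actually needed, and what occupies the bulk of the paper, is an induction on $r$: starting from a sequence satisfying $(\mathrm{rec}_{N,r})$ one performs a chain of transformations (multiplying by $\prod_{j=1}^{m}\frac{1-dq^{jN-a(r)}}{1-q^{jN}}$, passing to the generating function $f(x)=\sum\beta_nx^n$, dividing by $\prod_{k\geq1}(1+xq^{kN-a(r)})$, and returning to coefficients) to land on a sequence satisfying $(\mathrm{rec}_{N,r-1})$. The computational heart is the verification that the two $q$-difference equations arising from the $r$ and $r-1$ levels coincide (Lemma~\ref{equalfg}), which requires splitting the sums over $\alpha$ according to whether $a(r)$ appears as a summand and repeated use of the $q$-Pascal identities; and the limit is then extracted via Appell's comparison theorem, not by direct substitution. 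None of this machinery is present or replaced by an alternative in your proposal, so as written the argument does not close.
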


Theorem~\ref{schur} (resp. Theorem~\ref{schur_over}) corresponds to $N=3$, $a(1)= 1$, $a(2)=2$ in Theorems~\ref{andrews} and~\ref{andrews2} (resp. Theorems~\ref{dousse} and~\ref{dousse2}).
Again, the case $k=0$ of Theorem~\ref{dousse} (resp. Theorem~\ref{dousse2}) gives Theorem~\ref{andrews} (resp. Theorem~\ref{andrews2}).

Let us illustrate Theorems \ref{dousse2} with an example. Let $N=7$, $r=3$, $a(1)=1,$ $a(2)=2,$ $a(3)=4$.
The overpartitions of $8$ counted by $G(-A'_7;k,8)$ are $8$, $\overline{8}$, $5+3$ and $\overline{5}+3$. The overpartitions of $8$ into parts congruent to $3$, $5$ or $6$ modulo $7$ (counted by $F(-A_7;k,8)$) are $5+3$, $\overline{5}+3$, $5 + \overline{3}$ and $\overline{5}+ \overline{3}$. In both cases, we have $1$ overpartition with $0$ non-overlined parts, $2$ overpartitions with $1$ non-overlined part, and $1$ overpartition with $2$ non-overlined parts.

While the statements of Theorems~\ref{dousse} and~\ref{dousse2} resemble those of Andrews' theorems, the proofs are considerably more intricate and involve a number of new ideas. The proof of Theorem~\ref{dousse2} uses ideas similar to the proof of Theorem~\ref{dousse} presented in~\cite{Doussegene}, in the sense that it relies in going back and forth from $q$-difference equations to recurrence equations.

The remainder of this paper is devoted to the proof of Theorem~\ref{dousse2}.
First, we give the recurrence equation satisfied by the generating function for overpartitions enumerated by $G(-A'_N;k,n)$ having their largest part $\leq m$, using some combinatorial reasoning on the largest part. Then we prove by induction on $r$ that the limit when $m$ goes to infinity of a function satisfying this recurrence equation is equal to \begin{math} \prod_{j=1}^r \frac{(-q^{N-a(j)};q^N)_{\infty}}{(dq^{N-a(j)};q^N)_{\infty}}, \end{math} which is the generating function for overpartitions counted by $F(-A_N;k,n)$. Here we use the classical notation $(a;q)_{n} := \prod_{j=0}^{n-1} (1-aq^j).$

\section{The recurrence equation}
In this section, we establish the recurrence equation satisfied by the generating function for overpartitions enumerated by $G(-A'_N;k,n)$ having their largest part $\leq m$.

Let $n, m \in \N^*$, $k \in \N$.
Let $\pi_m(k,n)$ denote the number of overpartitions counted by $G(-A'_N;k,n)$ such that the largest part is $ \leq m$ and overlined.
Let $\phi_m(k,n)$ denote the number of overpartitions counted by $G(-A'_N;k,n)$ such that the largest part is $ \leq m$ and non-overlined.
Then $\psi_{m} (k,n) := \pi_{m}(k,n) + \phi_{m}(k,n)$ is the number of overpartitions counted by $G(-A'_N;k,n)$ with largest part $\leq m$.

Then the following holds.

\begin{lemma}
\label{lemma1}
We have
\begin{equation}
\label{eq1}
\begin{aligned}
\psi_{jN-\alpha(m)} (k,n) &- \psi_{jN-\alpha(m+1)} (k,n)
\\ =& ~\psi_{jN-w(\alpha(m))N-v(\alpha(m))} (k,n-jN+\alpha(m))\\
+& \psi_{jN-(w(\alpha(m))-1)N-v(\alpha(m))} (k,n-jN+\alpha(m)).
\end{aligned}
\end{equation}
\end{lemma}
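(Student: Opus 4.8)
The plan is to prove this recurrence by a careful combinatorial analysis of the largest part of the overpartitions counted by $\psi_{jN-\alpha(m)}(k,n)$. The left-hand side, $\psi_{jN-\alpha(m)}(k,n) - \psi_{jN-\alpha(m+1)}(k,n)$, counts exactly those overpartitions enumerated by $G(-A'_N;k,n)$ whose largest part $\lambda_1$ satisfies $jN-\alpha(m+1) < \lambda_1 \leq jN-\alpha(m)$. I would first observe that, since the parts lie in $-A'_N$ (positive integers congruent to some $-\alpha(i) \bmod N$) and since the $\alpha(i)$ are strictly increasing with $\alpha(m) < \alpha(m+1)$, the only admissible value of the largest part in this window is $\lambda_1 = jN - \alpha(m)$ itself. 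This is the key reduction: the difference on the left isolates overpartitions with a prescribed largest part, and this part may be either overlined or non-overlined, which is precisely what will produce the two terms on the right.

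Next I would remove this largest part and examine the constraint it imposes on the remaining overpartition $\lambda_2 + \cdots + \lambda_s$. After subtracting $\lambda_1 = jN-\alpha(m)$ from $n$, the sum of the remaining parts is $n - jN + \alpha(m)$, which explains the second argument on the right-hand side. The difference condition of Theorem~\ref{dousse2}, applied with $i=1$, bounds $\lambda_1 - \lambda_2$ from below in terms of $w$, $v$, and $\beta_N$ evaluated at $\lambda_2$; but here I want to read it as an upper bound on $\lambda_2$, hence on the largest part of the remaining overpartition. Since $\beta_N(\lambda_1) = \beta_N(jN-\alpha(m)) = N - \beta_N(\alpha(m))$ and $w(\alpha(m))$, $v(\alpha(m))$ record the combinatorial data of $\alpha(m)$, substituting $\lambda_1 = jN-\alpha(m)$ into the difference inequality should yield precisely the thresholds $jN - w(\alpha(m))N - v(\alpha(m))$ and $jN - (w(\alpha(m))-1)N - v(\alpha(m))$ appearing as the upper bounds on the largest part in the two $\psi$ terms on the right.

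The two terms then correspond to the two cases governed by the factor $\chi(\ov{\lambda_2})$ in the difference condition: when $\lambda_1$ is overlined versus non-overlined, the shift by $\chi$ in the argument of $w$ changes the gap requirement by exactly $N$. I would split according to whether the removed largest part $\lambda_1$ is overlined, using $\pi$ and $\phi$, and check that the overlined case produces the threshold with $w(\alpha(m))$ while the non-overlined case produces the one with $w(\alpha(m))-1$ (or vice versa, depending on how $\chi$ enters). Care is needed with the minimality condition $\lambda_s \geq N(w(\beta_N(-\lambda_s))-1)$ on the smallest part, to confirm that removing the largest part leaves an overpartition still satisfying all the hypotheses of $G(-A'_N)$, so that the remaining object is genuinely counted by a $\psi$ with the reduced largest-part bound.

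The main obstacle I anticipate is the bookkeeping that translates the difference condition into the exact shifted bounds $jN-w(\alpha(m))N-v(\alpha(m))$ and $jN-(w(\alpha(m))-1)N-v(\alpha(m))$, in particular verifying that $v$ and $w$ evaluated on $\beta_N(\lambda_2)$ combine correctly with the residue arithmetic modulo $N$ when $\lambda_1 = jN-\alpha(m)$ is substituted, and ensuring the two cases of $\chi$ align cleanly with the overlined/non-overlined split rather than double-counting or missing a boundary configuration. Establishing that $\lambda_1 = jN-\alpha(m)$ is the \emph{unique} value in the window, and that both signs (overlined and not) of this part remain compatible with the gap to $\lambda_2$ and with the structure of $A'$, is where the argument must be made precise.
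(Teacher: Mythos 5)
Your overall strategy---read the left-hand side as counting the overpartitions whose largest part equals $jN-\alpha(m)$, remove that part, and convert the difference condition into an upper bound on the new largest part---is the same as the paper's. But the case analysis you propose contains a genuine error, not a bookkeeping issue: you split on the wrong part. You claim the two terms on the right arise from whether the \emph{removed} part $\lambda_1$ is overlined, with ``the overlined case produc[ing] the threshold with $w(\alpha(m))$ and the non-overlined case the one with $w(\alpha(m))-1$.'' This cannot work. The difference condition involves $\chi(\ov{\lambda_{i+1}})$, the overline status of the \emph{smaller} part, so which of the two thresholds $jN-w(\alpha(m))N-v(\alpha(m))$ or $jN-(w(\alpha(m))-1)N-v(\alpha(m))$ bounds $\lambda_2$ is governed by whether $\lambda_2$ is overlined, not $\lambda_1$. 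If you split only on $\lambda_1$, each of your two classes still mixes overpartitions with overlined and with non-overlined $\lambda_2$, subject to different bounds, so neither class is enumerated by a single $\psi$. The status of $\lambda_1$ plays a different role: it decides whether the residual overpartition has $k$ or $k-1$ non-overlined parts. The paper therefore carries out a $2\times 2$ split---both statuses of $\lambda_1$, via the refined counts $\pi$ and $\phi$, crossed with both statuses of $\lambda_2$, which give the two thresholds---obtaining one equation for $\pi_{jN-\alpha(m)}(k,n)$ and one for $\phi_{jN-\alpha(m)}(k,n)$, and then recombines the four resulting terms into the two $\psi$'s using the identity $\pi_m(k-1,n)=\phi_m(k,n)$. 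That recombination step is entirely missing from your plan, and it is precisely what lets the recurrence close up in terms of $\psi$ alone.

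A second, smaller gap: you never actually derive the thresholds. You correctly observe that the condition in Theorem~\ref{dousse2} is written in terms of data attached to $\lambda_{i+1}$, whereas the subscripts on the right involve $w(\alpha(m))$ and $v(\alpha(m))$ with $\alpha(m)\equiv-\lambda_1\pmod N$, i.e.\ data attached to the larger part; you flag this tension as ``bookkeeping'' but leave it unresolved. The paper applies the inequality in the form $\lambda_2\le\lambda_1-w(\alpha(m))N-v(\alpha(m))+\alpha(m)$ when $\lambda_2$ is overlined (and with $w(\alpha(m))-1$ otherwise), which is exactly what produces the subscripts $jN-w(\alpha(m))N-v(\alpha(m))$ and $jN-(w(\alpha(m))-1)N-v(\alpha(m))$ after substituting $\lambda_1=jN-\alpha(m)$. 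Without settling how the condition is to be evaluated on the pair $(\lambda_1,\lambda_2)$, your argument does not reach the stated recurrence.
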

\begin{proof}
Let us first prove the following equation:
\begin{equation}
\label{aux1}
\begin{aligned}
\pi_{jN-\alpha(m)} (k,n) &= \pi_{jN-\alpha(m+1)} (k,n)
\\ &+ \pi_{jN-w(\alpha(m))N-v(\alpha(m))} (k,n-jN+\alpha(m))\\
&+ \phi_{jN-(w(\alpha(m))-1)N-v(\alpha(m))} (k,n-jN+\alpha(m)).
\end{aligned}
\end{equation}

We break the overpartitions counted by $\pi_{jN-\alpha(m)} (k,n)$ into two sets : those with largest part \begin{math} < jN-\alpha(m) \end{math} and those with largest part equal to $jN-\alpha(m)$. The first set is counted by $\pi_{jN-\alpha(m+1)} (k,n)$, and the second by
\begin{align*}
&\pi_{jN-w(\alpha(m))N-v(\alpha(m))} (k,n-jN+a(m))
\\&+ \phi_{jN-(w(\alpha(m))-1)N-v(\alpha(m))}(k,n-jN+a(m)).
\end{align*}
To see this, let us consider an overparition $n=\lambda_1 + \lambda_2 + \dots + \lambda_s$ counted by \begin{math}\pi_{jN-\alpha(m)} (k,n)\end{math} with largest part equal to \begin{math}jN-\alpha(m)\end{math}. Now remove its largest part $\lambda_1 = jN-\alpha(m)$. The number partitioned becomes  $n-jN+\alpha(m)$. The largest part was overlined so the number of non-overlined parts is still $k$. If $\lambda_2$ was overlined, then we have
\begin{align*}
\lambda_2 &\leq \lambda_1 - w(\alpha(m))N -v(\alpha(m)) +\alpha(m)\\
&\leq jN - w(\alpha(m))N -v(\alpha(m)),
\end{align*}
and we obtain an overpartition counted by $\pi_{jN-w(\alpha(m))N-v(\alpha(m))} (k,n-jN+a(m))$.
If $\lambda_2$ was not overlined, then we have
\begin{align*}
\lambda_2 &\leq \lambda_1 - (w(\alpha(m))-1)N -v(\alpha(m)) +\alpha(m)\\
&\leq jN - (w(\alpha(m))-1)N -v(\alpha(m)),
\end{align*}
and we obtain an overpartition counted by $\phi_{jN-(w(\alpha(m))-1)N-v(\alpha(m))} (k,n-jN+a(m))$.

In the same way we can prove the following
\begin{equation}
\label{aux2}
\begin{aligned}
\phi_{jN-\alpha(m)} (k,n) &= \phi_{jN-\alpha(m+1)} (k,n)
\\ &+ \pi_{jN-w(\alpha(m))N-v(\alpha(m))} (k-1,n-jN+\alpha(m))\\
&+ \phi_{jN-(w(\alpha(m))-1)N-v(\alpha(m))} (k-1,n-jN+\alpha(m)).
\end{aligned}
\end{equation}

Adding equations~\eqref{aux1} and~\eqref{aux2} and noting that for all $m,n,k,$ $\pi_m(k-1,n) = \phi_m(k,n)$ (we can either overline the largest part or not), we obtain equation~\eqref{eq1}.
\end{proof}

We define, for $m \geq 1$, $|q|<1$, $|d|<1$,
\begin{equation*}
g_m=g_m (q,d) := 1+ \sum_{n=1}^{\infty} \sum_{k=0}^{\infty} \psi_{m}(k,n) q^n d^k,
\end{equation*}
and for all $0 \leq k \leq r-1$, we set $g_{-m} (q,d)=(-d)^k$ for all $kN \leq m \leq (k+1)N$.
This definition is consistent with~\eqref{eq1} and the condition that \begin{math} \lambda_s \geq N (w(\beta_N(-\lambda_s))-1) \end{math}.

We want to find $\lim\limits_{m \rightarrow \infty} g_m$, which is the generating function for all overpartitions counted by $G(-A'_N;k,n)$. To do so, we  establish a recurrence equation relating $g_{(m-j)N-a(1)}$, for $0 \leq j \leq r$. Let us start by giving some relations between generating functions.

Lemma~\ref{lemma1} directly implies

\begin{lemma}
\label{lemma2}
We have
\begin{equation}
\label{eqf1}
\begin{aligned}
g_{jN-\alpha(m)}= g_{jN-\alpha(m+1)} +& q^{jN-\alpha(m)} g_{jN-w(\alpha(m))N-v(\alpha(m))}\\
+& dq^{jN-\alpha(m)} g_{jN-(w(\alpha(m))-1)N-v(\alpha(m))}.
\end{aligned}
\end{equation}

\end{lemma}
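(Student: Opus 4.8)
The plan is to deduce Lemma~\ref{lemma2} from Lemma~\ref{lemma1} by turning the coefficient recurrence~\eqref{eq1} into an identity between the generating functions $g_m$. Concretely, I would multiply~\eqref{eq1} by $q^n d^k$ and sum over all $n \geq 1$ and $k \geq 0$, and then recognise each of the resulting double sums as a shifted copy of some $g_M$. Since $g_{jN-\alpha(m)}$ and $g_{jN-\alpha(m+1)}$ carry the same constant term $1$, summing the left-hand side $\psi_{jN-\alpha(m)}(k,n)-\psi_{jN-\alpha(m+1)}(k,n)$ against $q^n d^k$ reproduces $g_{jN-\alpha(m)}-g_{jN-\alpha(m+1)}$, the constants cancelling. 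All the real work is therefore on the right-hand side, where the powers of $q$ and the factor $d$ of~\eqref{eqf1} must be made to appear.

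For each summand on the right of~\eqref{eq1} I would perform the substitution $n \mapsto n-jN+\alpha(m)$, which extracts a factor $q^{jN-\alpha(m)}$. Writing $n'=n-jN+\alpha(m)$ and using the conventions that $\psi_M(k,n')=0$ for $n'<0$ and that $\psi_M(0,0)=1$ while $\psi_M(k,0)=0$ for $k \geq 1$ (the empty overpartition), the shifted sum $\sum_{n'\geq 0,\,k\geq 0}\psi_M(k,n')q^{n'}d^k$ reassembles into $g_M$: the term $n'=0$ supplies exactly the constant $1$ built into the definition of $g_M$, and the terms $n'\geq 1$ give $g_M-1$. Applied to the first summand, whose number of non-overlined parts equals $k$, this yields the middle term $q^{jN-\alpha(m)}g_{jN-w(\alpha(m))N-v(\alpha(m))}$ of~\eqref{eqf1}, with no factor of $d$.

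The factor $d$ in the last term is the one point that needs care. In the derivation of Lemma~\ref{lemma1} the corresponding summand arises from removing a \emph{non-overlined} largest part, so it is evaluated at one fewer non-overlined part than $k$; re-indexing the overline-counting variable by $k \mapsto k-1$ then pulls out a factor $d$, while the $k=0$ contribution drops out on its own since no overpartition has $-1$ non-overlined parts. Together with the $q^{jN-\alpha(m)}$ from the $n$-shift this produces $d\,q^{jN-\alpha(m)}g_{jN-(w(\alpha(m))-1)N-v(\alpha(m))}$, and collecting the three pieces gives~\eqref{eqf1}. The main obstacle is thus entirely bookkeeping: I must align the $n'=0$ and $k=0$ edge cases with the $1+\cdots$ normalisation of the $g_m$ and with the shift in the overline-count, and check that the identity persists when a subscript $jN-w(\alpha(m))N-v(\alpha(m))$ or $jN-(w(\alpha(m))-1)N-v(\alpha(m))$ becomes non-positive, where the convention $g_{-m}=(-d)^k$ for $kN \leq m \leq (k+1)N$ is precisely what keeps the transform an exact identity rather than one valid only to leading order.
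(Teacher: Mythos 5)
Your proof is correct and is exactly the argument the paper leaves implicit (it states only that Lemma~\ref{lemma1} ``directly implies'' Lemma~\ref{lemma2}): multiply the coefficient recurrence by $q^n d^k$, sum over $n$ and $k$, shift $n$ to extract $q^{jN-\alpha(m)}$, and invoke the convention $g_{-m}=(-d)^k$ for the degenerate subscripts. You also correctly locate the factor $d$ in a $k\mapsto k-1$ shift coming from the removal of a non-overlined largest part; note that the last term of~\eqref{eq1} as printed carries $k$ rather than $k-1$, so your reading --- which is what actually follows from adding~\eqref{aux1} and~\eqref{aux2} via $\phi_M(k,n)=\pi_M(k-1,n)$, and is the only one consistent with the $d$ in~\eqref{eqf1} --- is the intended one.
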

\noindent Let $ 1 \leq k \leq r+1.$ Adding equations~\eqref{eqf1} together for $1 \leq m \leq 2^{k-1}-1$, using the fact that $\alpha\left(2^{k-1}\right)=a(k)$, we obtain

\begin{equation}
\label{eq3.5}
\begin{aligned}
g_{jN- a(1)} &= g_{jN -a(k)} 
\\&+ \sum_{\alpha < a(k)} \left( q^{jN-\alpha} g_{(j-w(\alpha))N-v(\alpha)} + dq^{jN-\alpha}g_{(j-w(\alpha)+1)N-v(\alpha)}\right).
\end{aligned}
\end{equation}
Let us now add equations~\eqref{eqf1} together for $2^{k-2} \leq m \leq 2^{k-1}-1$. This gives

\begin{equation}
\label{eq3.6}
\begin{aligned}
g_{jN- a(k)} &= g_{jN -a(k+1)} 
\\&+ \sum_{a(k) \leq \alpha < a(k+1)} \left( q^{jN-\alpha} g_{(j-w(\alpha))N-v(\alpha)} + dq^{jN-\alpha}g_{(j-w(\alpha)+1)N-v(\alpha)}\right).
\end{aligned}
\end{equation}
Every $a(k) < \alpha < a(k+1)$ is of the form $\alpha = a(k) + \alpha',$ with $\alpha' < a(k).$
Hence we can rewrite~\eqref{eq3.6} as

\begin{align*}
&g_{jN- a(k)} - g_{jN -a(k+1)}
\\&= q^{jN-a(k)} g_{(j-1)N-a(k)} + dq^{jN-a(k)} g_{jN-a(k)}
\\&+ \sum_{\alpha' < a(k)} \left( q^{jN-a(k)-\alpha'}  g_{(j-w(\alpha')-1)N-v(\alpha')}  + dq^{jN-a(k)-\alpha'}  g_{(j-w(\alpha'))N-v(\alpha')}\right)
\\&= q^{jN-a(k)} g_{(j-1)N-a(k)} + dq^{jN-a(k)} g_{jN-a(k)}
\\&+ q^{N-a(k)} \left(g_{(j-1)N- a(1)} - g_{(j-1)N -a(k)}\right),
\end{align*}
where the last equality follows from~\eqref{eq3.5}.

Thus
\begin{equation}
\label{eq3.7}
\begin{aligned}
\left(1-q^{jN-a(k)}\right) g_{jN-a(k)} &= g_{jN-a(k+1)}+q^{N-a(k)}g_{(j-1)N-a(1)}
\\&-q^{N-a(k)}\left(1-q^{(j-1)N}\right)g_{(j-1)N-a(k)}.
\end{aligned}
\end{equation}

We want to find the recurrence equation satisfied by $(g_{\ell N-a(1)})_{\ell \in \N}$.
Before doing so, we must recall some facts about $q$-binomial coefficients defined by
$${m \brack r}_q :=
\begin{cases}
\frac{\left(1-q^m\right)\left(1-q^{m-1}\right) \dots \left(1-q^{m-r+1}\right)}{\left(1-q\right) \left(1-q^2\right) \dots \left(1-q^r\right)}\ \text{if}\ 0 \leq r \leq m,\\
0 \ \text{otherwise}.
\end{cases}$$
They are $q$-analogues of the binomial coefficients and satisfy $q$-analogues of the Pascal triangle identity~\cite{Gasper}.

\begin{proposition}
\label{pascal}
For all integers $0 \leq r \leq m$,
\begin{equation}
\label{pascal1}
{m \brack r}_q = q^r {m-1 \brack r}_q + {m-1 \brack r-1}_q,
\end{equation}
\begin{equation}
\label{pascal2}
{m \brack r}_q ={m-1 \brack r}_q + q^{m-r} {m-1 \brack r-1}_q.
\end{equation}
\end{proposition}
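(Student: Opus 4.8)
The plan is to prove both identities by a direct algebraic computation, after first recasting the $q$-binomial coefficient in a symmetric form. Writing $(q;q)_n := (1-q)(1-q^2)\cdots(1-q^n)$ for the $q$-factorial (with $(q;q)_0 = 1$), I would observe that the definition given above is equivalent, for $0 \leq r \leq m$, to
\[
{m \brack r}_q = \frac{(q;q)_m}{(q;q)_r\,(q;q)_{m-r}},
\]
since the $r$-term numerator $(1-q^m)\cdots(1-q^{m-r+1})$ is precisely $(q;q)_m/(q;q)_{m-r}$. This closed form makes the common factor appearing in each identity transparent and is the only preparatory step needed.

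For~\eqref{pascal1}, I would expand the right-hand side $q^r {m-1 \brack r}_q + {m-1 \brack r-1}_q$ using this closed form and factor out the common quantity $(q;q)_{m-1}/\bigl((q;q)_r\,(q;q)_{m-r}\bigr)$, invoking the elementary splittings $(q;q)_r = (q;q)_{r-1}(1-q^r)$ and $(q;q)_{m-r} = (q;q)_{m-1-r}(1-q^{m-r})$ to match denominators. The bracketed factor that remains collapses via
\[
q^r\bigl(1-q^{m-r}\bigr) + \bigl(1-q^r\bigr) = 1 - q^m,
\]
which is the heart of the calculation; reabsorbing the factor $(1-q^m)$ turns $(q;q)_{m-1}$ into $(q;q)_m$ and reproduces ${m \brack r}_q$.

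Rather than repeating this for~\eqref{pascal2}, I would deduce it from~\eqref{pascal1} using the symmetry ${m \brack r}_q = {m \brack m-r}_q$, which is immediate from the closed form. Replacing $r$ by $m-r$ in~\eqref{pascal1} and applying this symmetry to each of the three coefficients yields~\eqref{pascal2} at once. (As an alternative, one may expand~\eqref{pascal2} exactly as above, now relying on the companion simplification $\bigl(1-q^{m-r}\bigr) + q^{m-r}\bigl(1-q^r\bigr) = 1-q^m$.)

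I do not expect any genuine obstacle in this argument: the computation is routine once the closed form is in place. The only points that require care are the bookkeeping when clearing the common denominator, and the boundary values $r = 0$ and $r = m$, where the closed-form manipulation formally produces a coefficient lying outside the range $0 \leq r \leq m$. Using the convention that ${m \brack r}_q = 0$ whenever $r < 0$ or $r > m$, both identities reduce at these endpoints to the trivial equality $1 = 1$, so they hold throughout the stated range.
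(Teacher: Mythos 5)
Your proposal is correct and complete. Note that the paper itself does not prove Proposition~\ref{pascal} at all: it simply states the two identities as standard $q$-analogues of Pascal's rule and cites the literature (\cite{Gasper}). Your argument therefore supplies a self-contained verification rather than an alternative to anything in the text. The computation checks out: rewriting ${m \brack r}_q = (q;q)_m/\bigl((q;q)_r (q;q)_{m-r}\bigr)$, extracting the common factor $(q;q)_{m-1}/\bigl((q;q)_r(q;q)_{m-r}\bigr)$, and collapsing $q^r(1-q^{m-r})+(1-q^r)=1-q^m$ gives~\eqref{pascal1}, and the symmetry ${m \brack r}_q={m \brack m-r}_q$ correctly transports this to~\eqref{pascal2}. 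The only caveat is your closing claim that the endpoint cases reduce to $1=1$: this holds for $r=0$ and $r=m$ when $m\geq 1$, but for the degenerate case $m=r=0$ both right-hand sides vanish under the stated convention while the left-hand side is $1$; since the paper only ever invokes the identities with $m\geq 1$, this is harmless, but you should either exclude $m=0$ or flag the convention explicitly.
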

As $q \rightarrow 1$ these equations become Pascal's identity.

We are now ready to state the key lemma which will lead to the desired recurrence equation.
\begin{lemma}
\label{conj}
For $1 \leq k \leq r+1$, we have
\begin{equation}
\label{eq}
\begin{aligned}
\prod_{j=1}^{k-1} &\left(1-dq^{\ell N-a(j)}\right) g_{\ell N-a(1)} = g_{\ell N-a(k)}
\\ + \sum_{j=1}^{k-1} &\left( \sum_{m=0}^{k-j-1} d^m \sum_{\substack{\alpha < a(k) \\ w(\alpha)=j+m}} q^{\ell N-\alpha} \left( (-1)^{m-1} q^{\ell (m-1)N} {j+m-1 \brack m-1}_{q^{-N}} \right. \right.
\\ &\left. \vphantom{\sum_{\substack{\alpha < a(r) \\ w(\alpha)=j+m-1}}} \left. + (-1)^{m} q^{\ell mN} {j+m \brack m}_{q^{-N}} \right) \right) \prod_{h=1}^{j-1} \left(1-q^{(\ell-h)N}\right) g_{(\ell-j) N-a(1)}.
\end{aligned}
\end{equation}
\end{lemma}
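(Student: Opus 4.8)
The plan is to prove Lemma~\ref{conj} by induction on $k$. The base case $k=1$ is immediate: both the product $\prod_{j=1}^{0}$ and the sum $\sum_{j=1}^{0}$ are empty, so \eqref{eq} collapses to the tautology $g_{\ell N-a(1)}=g_{\ell N-a(1)}$. For the inductive step I assume \eqref{eq} at level $k$, for every value of $\ell$, and deduce it at level $k+1$. Abbreviating $u_\ell:=g_{\ell N-a(1)}$, the hypothesis expresses $g_{\ell N-a(k)}$ (and, with $\ell$ replaced by $\ell-1$, also $g_{(\ell-1)N-a(k)}$) as $\prod_{i=1}^{k-1}(1-dq^{\ell N-a(i)})\,u_\ell$ minus an explicit combination of the $u_{\ell-j}$.

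The mechanism of the step is as follows. I multiply the level-$k$ identity by $(1-dq^{\ell N-a(k)})$, which turns its left-hand product into the level-$(k+1)$ product $\prod_{i=1}^{k}(1-dq^{\ell N-a(i)})$. On the right I then rewrite $(1-dq^{\ell N-a(k)})g_{\ell N-a(k)}$ by means of \eqref{eq3.7} taken at $j=\ell$; this produces the leading term $g_{\ell N-a(k+1)}$, a multiple of $u_{\ell-1}$, and an unwanted term proportional to $g_{(\ell-1)N-a(k)}$. The latter I remove using the induction hypothesis at $\ell-1$. After these substitutions every term is a multiple of some $u_{\ell-j}$, $1\le j\le k$, so it remains only to check that the coefficient of each $u_{\ell-j}$ agrees with the one prescribed by \eqref{eq} at level $k+1$. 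Writing $C_{k,j}(\ell)$ for the full coefficient of $u_{\ell-j}$ in \eqref{eq} (including the product $\prod_{h=1}^{j-1}(1-q^{(\ell-h)N})$), this amounts, for $2\le j\le k-1$, to the recurrence $C_{k+1,j}(\ell)=(1-dq^{\ell N-a(k)})C_{k,j}(\ell)+q^{N-a(k)}(1-q^{(\ell-1)N})C_{k,j-1}(\ell-1)$, while $j=1$ and $j=k$ give boundary variants.

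To verify this recurrence I first note that the products $\prod_{h=1}^{j-1}(1-q^{(\ell-h)N})$ telescope: for $j\ge2$ one has $\prod_{h=1}^{j-1}(1-q^{(\ell-h)N})=(1-q^{(\ell-1)N})\prod_{h=1}^{j-2}(1-q^{((\ell-1)-h)N})$, which absorbs the factor $(1-q^{(\ell-1)N})$ carried by the shifted term. Stripping off these products, the claim reduces to an identity among the inner $\alpha$-sums. Here I split the range $\alpha<a(k+1)$ with $w(\alpha)=j+m$ into the part with $\alpha<a(k)$ and the part with $\alpha=a(k)+\alpha'$, where $\alpha'<a(k)$ and $w(\alpha')=j+m-1$; the hypothesis $\sum_{i=1}^{k}a(i)<a(k+1)$ guarantees this is exactly the decomposition, the single summand $\alpha=a(k)$ corresponding to the empty $\alpha'=0$. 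Writing $q^{\ell N-\alpha}=q^{N-a(k)}q^{(\ell-1)N-\alpha'}$ on the second part realigns these sums with those produced by the shifted term, and the whole recurrence collapses, term by term in $\alpha'$, onto the purely $q$-binomial identity $B_{j,m}(\ell)=-q^{\ell N}B_{j,m-1}(\ell)+B_{j-1,m}(\ell-1)$, where $B_{j,m}(\ell):=(-1)^{m-1}q^{\ell(m-1)N}{j+m-1\brack m-1}_{q^{-N}}+(-1)^{m}q^{\ell mN}{j+m\brack m}_{q^{-N}}$ is the bracketed combination occurring in \eqref{eq}. Separating the coefficients of $(-1)^{m-1}$ and of $(-1)^{m}$, each half is exactly one instance of the $q$-Pascal identity~\eqref{pascal1} of Proposition~\ref{pascal}.

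I expect the main difficulty to lie not in the $q$-binomial identity itself, which is routine once isolated, but in the surrounding bookkeeping: tracking the two competing index shifts (the shift $m\mapsto m-1$ caused by the factor $-dq^{\ell N-a(k)}$, against the simultaneous shift $j\mapsto j-1$, $\ell\mapsto\ell-1$ caused by writing $\alpha=a(k)+\alpha'$) and handling the boundaries cleanly. In particular, the case $j=1$ does not fit the generic recurrence verbatim: there the telescoping of the products degenerates, so the shifted contribution must instead be matched against the leading term $\prod_{i=1}^{k-1}(1-dq^{(\ell-1)N-a(i)})\,u_{\ell-1}$, which one identifies with the generating function of the subset sums of $\{a(1),\dots,a(k-1)\}$. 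One must also keep the conventions that $q$-binomials with a negative entry vanish and that the empty sum has $w=0$, so that the extreme terms $m=0$, $m=k-j$, and the top case $j=k$ (a single $\alpha=a(1)+\cdots+a(k)$) come out correctly.
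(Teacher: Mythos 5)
Your proposal is correct and takes essentially the same route as the paper: the paper proves this lemma by transcribing (under the substitutions $q\mapsto q^{-1}$, $x\mapsto q^{\ell N}$, $f_{a(i)}(q^{mN})\mapsto g_{mN-a(i)}$) the proof of Lemma 2.4 of~\cite{Doussegene}, which is exactly the induction on $k$ you describe --- multiply by the new factor $(1-dq^{\ell N-a(k)})$, substitute the $q$-difference equation~\eqref{eq3.7}, apply the induction hypothesis at $\ell-1$, split the sums over $\alpha<a(k+1)$ according to whether $a(k)$ is a summand, and close with the $q$-Pascal identity~\eqref{pascal1}. One small remark: you correctly use~\eqref{eq3.7} with the factor $\left(1-dq^{\ell N-a(k)}\right)$ on the left-hand side; the $d$ is missing from the paper's displayed equation, evidently a typo given the derivation immediately preceding it.
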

\begin{proof}
To prove this lemma, it is sufficient to replace $q$ by $q^{-1}$, then $x$ by $q^\ell N$ and finally $f_{a(i)}\left(q^{mN}\right)$ by $g_{mN-a(i)}$ in the proof of Lemma 2.4 of~\cite{Doussegene}.
\end{proof}

Writing $u_{\ell}:= g_{\ell N -a(1)}$ and setting $k=r+1$ in Lemma~\ref{conj}, we obtain the desired recurrence equation
\begin{equation}
\label{rec}
\tag{$\mathrm{rec}_{N,r}$}
\begin{aligned}
\prod_{j=1}^{r} &\left(1-dq^{\ell N-a(j)}\right) u_{\ell} = u_{\ell-1}
\\ + \sum_{j=1}^{r} &\left( \sum_{m=0}^{r-j} d^m \sum_{\substack{\alpha < a(r+1) \\ w(\alpha)=j+m}} q^{\ell N-\alpha} \left( (-1)^{m-1} q^{\ell (m-1)N} {j+m-1 \brack m-1}_{q^{-N}} \right. \right.
\\& \left. \vphantom{\sum_{\substack{\alpha < a(r) \\ w(\alpha)=j+m-1}}} \left. + (-1)^{m} q^{\ell mN} {j+m \brack m}_{q^{-N}} \right) \right) \prod_{h=1}^{j-1} \left(1-q^{(\ell-h)N}\right) u_{\ell-j},
\end{aligned}
\end{equation}
with the initial conditions $u_{-k}=(-d)^k$ for all $0 \leq k \leq r-1$.

\section{Evaluating $\lim\limits_{\ell \rightarrow \infty} u_{\ell}$ by induction}
In this section, we evaluate $\lim\limits_{\ell \rightarrow \infty} u_{\ell}$, which is the generating function for partitions counted by $G(-A'_N;k,n)$.
To do so, we prove the following theorem by induction on $r$.
\begin{theorem}
\label{main}
Let $r$ be a positive integer. Then for every $N \geq \alpha(2^r-1)$, for every sequence $(u_m)_{m \in \N}$ satisfying $(\mathrm{rec}_{N,r})$ and the initial condition $u_0=1$, we have 
\begin{displaymath}
\lim\limits_{\ell \rightarrow \infty} u_{\ell}= \prod_{k=1}^r \frac{(-q^{N-a(k)};q^N)_{\infty}}{(dq^{N-a(k)};q^N)_{\infty}}.
\end{displaymath} 
\end{theorem}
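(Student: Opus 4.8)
The plan is to argue by induction on $r$, the cardinality of $A$, isolating at each step the factor corresponding to the largest element $a(r)$.

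For the base case $r=1$ one has $A=A'=\{a(1)\}$, so $w(a(1))=1$ and $v(a(1))=a(1)$, and the double sum in $(\mathrm{rec}_{N,1})$ collapses. The only admissible $\alpha<a(2)=N+a(1)$ with $w(\alpha)=1$ is $\alpha=a(1)$, only the term $m=0$ survives, and there the $q$-binomial ${0 \brack -1}_{q^{-N}}$ vanishes while ${1 \brack 0}_{q^{-N}}=1$; the empty product $\prod_{h=1}^{0}$ equals $1$. Hence $(\mathrm{rec}_{N,1})$ reduces to the first-order relation $(1-dq^{\ell N-a(1)})u_\ell=(1+q^{\ell N-a(1)})u_{\ell-1}$, which with $u_0=1$ telescopes to $u_\ell=\prod_{i=1}^{\ell}\frac{1+q^{iN-a(1)}}{1-dq^{iN-a(1)}}$. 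Letting $\ell\to\infty$, legitimate since $|q|<1$ and $|d|<1$, yields exactly $\frac{(-q^{N-a(1)};q^N)_\infty}{(dq^{N-a(1)};q^N)_\infty}$.

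For the inductive step I would assume the statement for $r-1$; it applies to $\{a(1),\dots,a(r-1)\}$ because the hypothesis $N\ge\alpha(2^r-1)=a(1)+\cdots+a(r)$ forces $N\ge a(1)+\cdots+a(r-1)$, and the defining inequalities and distinctness of the sums are inherited by any subset. The strategy is to pass from a solution $(u_\ell)$ of $(\mathrm{rec}_{N,r})$ with $u_0=1$ to an auxiliary sequence $(v_\ell)$ solving $(\mathrm{rec}_{N,r-1})$ with $v_0=1$, arranged so that $\lim_{\ell\to\infty}u_\ell$ equals $\lim_{\ell\to\infty}v_\ell$ times the missing factor $\frac{(-q^{N-a(r)};q^N)_\infty}{(dq^{N-a(r)};q^N)_\infty}$. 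The construction mirrors the derivation of $(\mathrm{rec}_{N,r})$ itself: split every $\alpha$-sum according to whether $\alpha<a(r)$, which are the summands already present in the $(r-1)$ problem, or $a(r)\le\alpha<a(r+1)$, writing in the second case $\alpha=a(r)+\alpha'$ with $\alpha'<a(r)$ and $w(\alpha)=w(\alpha')+1$, exactly as in the passage from~\eqref{eq3.6} to~\eqref{eq3.7}.

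The heart of the argument, and the step I expect to be the main obstacle, is the algebraic recombination after this splitting. Shifting the index $j\mapsto j-1$ in the contributions from $\alpha=a(r)+\alpha'$ alters the $q$-binomial coefficients ${j+m-1 \brack m-1}_{q^{-N}}$ and ${j+m \brack m}_{q^{-N}}$, and one must invoke both $q$-Pascal identities of Proposition~\ref{pascal} to reassemble them into the coefficients demanded by $(\mathrm{rec}_{N,r-1})$, while simultaneously absorbing the extra left-hand factor $(1-dq^{\ell N-a(r)})$ and the extra factor $(1-q^{(\ell-h)N})$ generated by $\prod_{h=1}^{j-1}$. This is precisely the bookkeeping performed on the generating-function side in Lemma~2.4 of~\cite{Doussegene}, now to be executed at the level of the coefficient recurrence. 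Once $(v_\ell)$ is shown to satisfy $(\mathrm{rec}_{N,r-1})$, the induction hypothesis gives $\lim_{\ell\to\infty}v_\ell=\prod_{k=1}^{r-1}\frac{(-q^{N-a(k)};q^N)_\infty}{(dq^{N-a(k)};q^N)_\infty}$, and it remains only to justify the interchange of limits and the factorisation. Here the uniform geometric decay in $\ell$ of every coefficient carrying a positive power of $q^{\ell N}$ should guarantee both that $\lim_{\ell\to\infty}u_\ell$ exists independently of the unspecified initial data $u_{-1},\dots,u_{-(r-1)}$ and that it splits off the desired $k=r$ factor, completing the induction.
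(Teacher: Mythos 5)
Your base case is correct and matches the paper, and induction on $r$ by stripping off the factor for $a(r)$ is indeed the right strategy. But the inductive step has a genuine gap: you never define the auxiliary sequence $(v_\ell)$, and the route you propose --- carrying out the reduction ``at the level of the coefficient recurrence'' by splitting the $\alpha$-sums and reshuffling $q$-binomials --- cannot produce it, because the relation between a solution of $(\mathrm{rec}_{N,r})$ and a solution of $(\mathrm{rec}_{N,r-1})$ is \emph{not} a termwise rescaling $v_\ell = c_\ell u_\ell$. In the paper's proof one first rescales termwise, $\beta_m = u_m \prod_{j=1}^{m}\frac{1-dq^{jN-a(r)}}{1-q^{jN}}$, but the remaining step is a \emph{convolution}: the generating functions satisfy $\sum \beta_m x^m = \prod_{k\ge 1}(1+xq^{kN-a(r)})\sum s_m x^m$, where $(s_m)$ (after another termwise rescaling) solves $(\mathrm{rec}_{N,r-1})$. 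This is why the argument must detour through the $q$-difference equations $(\mathrm{eq}_{N,r})$ and $(\mathrm{eq}'_{N,r-1})$ and prove they coincide (Lemma~\ref{equalfg}, which is where the $\alpha$-splitting by whether $a(r)$ is a summand and the $q$-Pascal identities actually live); a purely sequence-level manipulation has no natural place to perform the Cauchy product.

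The same gap undermines your final step. For a termwise relation one could pass to the limit factor by factor, but once a convolution is involved, deducing $\lim_\ell \beta_\ell = \prod_{k\ge1}(1+q^{kN-a(r)})\cdot\lim_\ell s_\ell$ requires an Abelian-type result --- the paper multiplies by $(1-x)$ and invokes Appell's Comparison Theorem --- and your appeal to ``uniform geometric decay'' does not substitute for it. (A minor point: the worry about the unspecified values $u_{-1},\dots,u_{-(r-1)}$ is moot, since for $j>\ell$ the coefficient of $u_{\ell-j}$ in $(\mathrm{rec}_{N,r})$ contains the factor $\prod_{h=1}^{j-1}(1-q^{(\ell-h)N})$, which vanishes at $h=\ell$.) To close the gap you would need to (i) write down the composite transformation $u \mapsto \beta \mapsto s \mapsto \mu$ explicitly, (ii) verify the identity of the two $q$-difference equations, which is the real computational core, and (iii) justify the limit interchange via Appell's theorem or an equivalent.
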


The idea of the proof is to start from a function satisfying $(\mathrm{rec}_{N,r})$ and to do some transformations to obtain a function satisfying $(\mathrm{rec}_{N,r-1})$ in order to use the induction hypothesis.
In order to simplify the proof, we split it into several lemmas.

\begin{lemma}
\label{lemmau}
Let $(u_m)$ and $(\beta_m)$ be two sequences such that for all $m \in \N$,
$$\beta_m:= u_m \prod_{j=1}^{m} \frac{1-dq^{jN-a(r)}}{1-q^{jN}}$$
Then $u_0=1$ and $(u_m)$ satisfies~\eqref{rec} if and only if $\beta_0=1$ and $(\beta_m)$ satisfies the following recurrence equation
\begin{equation}
\label{recbeta}
\tag{$\mathrm{rec}'_{N,r}$}
\begin{aligned}
&\left(1 + \sum_{j=1}^{r} \left( d^{j-1} \sum_{\substack{ \alpha < a(r) \\ w(\alpha)=j-1}} q^{-\alpha} +d^j \sum_{\substack{ \alpha < a(r) \\ w(\alpha)=j}} q^{-\alpha} \right) (-1)^j q^{j\ell N} \right) \beta_{\ell}
\\&= \beta_{\ell-1} + \sum_{j=1}^r \sum_{h=1}^r \sum_{k=0}^{\min(j-1,h-1)}c_{k,j}b_{h-k,j} (-1)^{h+1} q^{h \ell N} \beta_{\ell-j},
\end{aligned}
\end{equation}
where
$$c_{k,j}:= q^{-N \frac{k(k+1)}{2} - k a(r)} {j-1 \brack k}_{q^{-N}} d^k,$$
and
$$b_{m,j}:= \left( d^{m-1} \sum_{\substack{ \alpha < a(r+1) \\ w(\alpha)=j+m-1}} q^{-\alpha} +d^m \sum_{\substack{ \alpha < a(r+1) \\ w(\alpha)=j+m}} q^{-\alpha} \right) {j+m-1 \brack m-1}_{q^{-N}}.$$
\end{lemma}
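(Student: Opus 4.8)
The plan is to treat the relation defining $\beta_m$ as an invertible change of variables and to check that it transports $(\mathrm{rec}_{N,r})$ into $(\mathrm{rec}'_{N,r})$ term by term. Writing $P_m := \prod_{j=1}^{m} \frac{1-dq^{jN-a(r)}}{1-q^{jN}}$, so that $\beta_m = u_m P_m$ and $u_m = \beta_m/P_m$, the initial condition is immediate: $P_0$ is an empty product equal to $1$, whence $\beta_0 = u_0$ and $u_0 = 1 \iff \beta_0 = 1$. Since every $P_m$ is a nonzero invertible factor for $|q|<1$, $|d|<1$, each instance of $(\mathrm{rec}_{N,r})$ is equivalent to the equation obtained by multiplying it by $P_{\ell-1}$ and substituting $u_{\ell-j} = \beta_{\ell-j}/P_{\ell-j}$ throughout; the whole content of the lemma is therefore that this substitution reproduces exactly $(\mathrm{rec}'_{N,r})$. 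The two ratio identities I would record first are $P_{\ell-1}/P_{\ell} = \frac{1-q^{\ell N}}{1-dq^{\ell N - a(r)}}$ and, for $j\ge 1$, $P_{\ell-1}/P_{\ell-j} = \prod_{i=\ell-j+1}^{\ell-1}\frac{1-dq^{iN-a(r)}}{1-q^{iN}}$ (an empty product when $j=1$).

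Multiplying $(\mathrm{rec}_{N,r})$ by $P_{\ell-1}$ and applying the first ratio, the left-hand side $\prod_{j=1}^{r}(1-dq^{\ell N-a(j)})\,u_\ell$ collapses to $(1-q^{\ell N})\prod_{j=1}^{r-1}(1-dq^{\ell N-a(j)})\,\beta_\ell$. Writing $X := q^{\ell N}$ and expanding this polynomial by elementary symmetric functions of $q^{-a(1)},\dots,q^{-a(r-1)}$ — and recognising $\sum_{\alpha<a(r),\,w(\alpha)=m}q^{-\alpha}$ as the $m$-th such symmetric function, since every sum of distinct $a(i)$ with $i\le r-1$ is $<a(r)$ by hypothesis and conversely any $\alpha<a(r)$ uses only those summands — identifies it with the $\beta_\ell$-coefficient displayed on the left of $(\mathrm{rec}'_{N,r})$. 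The standalone term transforms cleanly as $P_{\ell-1}u_{\ell-1}=\beta_{\ell-1}$, matching the standalone $\beta_{\ell-1}$ on the right of $(\mathrm{rec}'_{N,r})$.

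The heart of the argument is the coefficient of $\beta_{\ell-j}$ for $j\ge 1$. Using the second ratio identity together with the cancellation $\prod_{h=1}^{j-1}(1-q^{(\ell-h)N}) = \prod_{i=\ell-j+1}^{\ell-1}(1-q^{iN})$ against the denominator of $P_{\ell-1}/P_{\ell-j}$, this coefficient becomes the clean product $(\mathrm{coeff}_j)\cdot\prod_{i=\ell-j+1}^{\ell-1}(1-dq^{iN-a(r)})$, where $\mathrm{coeff}_j$ is the bracketed factor multiplying $u_{\ell-j}$ in $(\mathrm{rec}_{N,r})$. Here I would invoke the finite $q$-binomial theorem to expand
\[
\prod_{i=\ell-j+1}^{\ell-1}\left(1-dq^{iN-a(r)}\right)=\sum_{k=0}^{j-1}(-1)^k c_{k,j}\,X^{k},
\]
which is precisely where the coefficients $c_{k,j}$, with the Gaussian binomial ${j-1\brack k}_{q^{-N}}$ and the quadratic exponent $-N\frac{k(k+1)}{2}-ka(r)$, arise. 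Separately, regrouping $\mathrm{coeff}_j$ by powers of $X$ — the two Gaussian binomials ${j+p-1\brack p-1}_{q^{-N}}$ produced by its two constituent terms coincide — gives $\mathrm{coeff}_j=\sum_{p\ge 1}(-1)^{p-1}b_{p,j}\,X^{p}$. Multiplying the two expansions, setting $h=p+k$, and tracking signs via $(-1)^{p-1}(-1)^{k}=(-1)^{h+1}$ yields $\sum_{h}\left(\sum_{k=0}^{\min(j-1,h-1)}c_{k,j}b_{h-k,j}(-1)^{h+1}\right)q^{h\ell N}$, where the cutoff $k\le\min(j-1,h-1)$ is forced by $k\le j-1$ (from $c_{k,j}$) and $h-k\ge 1$ (since $b_{p,j}$ vanishes for $p\le 0$), and $h\le r$ follows from $p\le r-j+1$. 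This is exactly the $\beta_{\ell-j}$-coefficient in $(\mathrm{rec}'_{N,r})$.

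I expect the main obstacle to lie entirely in this last step: expanding the finite product by the $q$-binomial theorem so that $c_{k,j}$ emerges with the correct quadratic power of $q$ and sign, regrouping $\mathrm{coeff}_j$ into the $b_{p,j}$ (which requires matching its two Gaussian-binomial terms to a single one), and then carrying out the Cauchy-type convolution and reindexing $h=p+k$ so that the summation ranges and the $\min(j-1,h-1)$ cutoff come out precisely as stated. The remaining ingredients — the equivalence of the two recurrences via the invertibility of $P_{\ell-1}$, the initial condition, and the left-hand side identity — are routine algebra, and the stated ``if and only if'' follows because every manipulation (multiplication by the nonzero $P_{\ell-1}$ and substitution) is reversible.
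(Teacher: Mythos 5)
Your proposal is correct and follows essentially the same route as the paper: substitute $u_m=\beta_m/P_m$, observe that $\prod_{h=1}^{j-1}(1-q^{(\ell-h)N})$ cancels against $P_{\ell-1}/P_{\ell-j}$ to leave $\prod_{h=1}^{j-1}(1-dq^{(\ell-h)N-a(r)})$, expand that product by the finite $q$-binomial theorem to produce the $c_{k,j}$, regroup the bracketed coefficient of $u_{\ell-j}$ into the $b_{p,j}$, and convolve. The only differences are expository (you make the elementary-symmetric-function reading of the left-hand side and the reindexing $h=p+k$ explicit, which the paper leaves implicit), so no further comment is needed.
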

\begin{proof}
Directly plugging the definition of $(\beta_m)$ into $(\mathrm{rec}_{N,r})$, we get
\begin{align*}
(1-q^{\ell N}) &\prod_{j=1}^{r-1} \left(1-dq^{\ell N-a(j)}\right) \beta_{\ell} = \beta_{\ell-1}
\\ + \sum_{j=1}^{r} &\left( \sum_{m=0}^{r-j} d^m \sum_{\substack{\alpha < a(r+1) \\ w(\alpha)=j+m}} q^{\ell N -\alpha} \left( (-1)^{m-1} q^{\ell (m-1) N} {j+m-1 \brack m-1}_{q^{-N}} \right.\right.
\\&\left. \vphantom{\sum_{\substack{\alpha < a(r) \\ w(\alpha)=j+m-1}}} \left. + (-1)^m q^{\ell m N} {j+m \brack m}_{q^{-N}} \right) \right) \prod_{h=1}^{j-1} \left(1-dq^{(\ell-h)N-a(r)}\right) \beta_{\ell-j}.
\end{align*}
With the conventions that
$$ \sum_{\substack{\alpha < a(r) \\ w(\alpha)=n}} q^{-\alpha} = 0 \ \text{for}\ n \geq r,$$
and
$$ \sum_{\substack{\alpha < a(r) \\ w(\alpha)=0}} q^{-\alpha} = 1,$$
this can be reformulated as
\begin{align*}
&\left(1 + \sum_{j=1}^{r} \left( d^{j-1} \sum_{\substack{ \alpha < a(r) \\ w(\alpha)=j-1}} q^{-\alpha} +d^j \sum_{\substack{ \alpha < a(r) \\ w(\alpha)=j}} q^{-\alpha} \right) (-1)^j q^{j\ell N} \right) \beta_{\ell} = \beta_{\ell -1}
\\ &+ \sum_{j=1}^{r} \left( \sum_{m=1}^{r-j+1} \left(d^{m-1} \sum_{\substack{\alpha < a(r+1) \\ w(\alpha)=j+m-1}} q^{-\alpha} + d^m \sum_{\substack{\alpha < a(r+1) \\ w(\alpha)=j+m}} q^{-\alpha}\right) {j+m-1 \brack m-1}_{q^{-N}} \right. 
\\ &\qquad \qquad \left. \vphantom{\sum_{\substack{\alpha < a(r) \\ w(\alpha)=j+m-1}}} (-1)^{m-1} q^{m \ell N} \right) \left(\sum_{k=0}^{j-1} q^{-N \frac{k(k-1)}{2}-k a(r)} {j-1 \brack k}_{q^{-N}} d^k (-1)^k q^{k \ell N}\right) \beta_{\ell-j},
\end{align*}
because of the $q$-binomial theorem~\cite{Gasper}
\begin{equation}
\label{identity}
\prod_{k=0}^{n-1}(1+q^kt) = \sum_{k=0}^n q^{\frac{k(k-1)}{2}} {n \brack k}_q t^k,
\end{equation}
in which we replace $q$ by $q^{-N}$, $n$ by $j-1$ and $t$ by $-dq^{(\ell-1)N-a(r)}$.
Finally, noting that $b_{l-k,j} = 0$ if $j+l-k-1 \geq r$, we can rewrite this as~\eqref{recbeta}.
Moreover, $\beta_0=u_0=1$ and the lemma is proved.
\end{proof}

We can directly transform ~\eqref{recbeta} into a $q$-difference equation on the generating function for $(\beta_m)$.
\begin{lemma}
\label{lemmaf}
Let $(\beta_m)$ be a sequence and $f$ a function such that $$f(x) := \sum_{n=0}^{\infty} \beta_n x^n.$$
Then $(\beta_m)$ satisfies \eqref{recbeta} and the initial condition $\beta_0=1$ if and only if $f(0)=1$ and $f$ satisfies the following recurrence equation
\begin{equation}
\label{eqf}
\tag{$\mathrm{eq}_{N,r}$}
\begin{aligned}
(1-x) f(x) = \sum_{m=1}^{r} \Bigg( &d^{m-1} \sum_{\substack{\alpha < a(r) \\ w(\alpha)=m-1}} q^{-\alpha} + d^{m} \sum_{\substack{\alpha < a(r) \\ w(\alpha)=m}} q^{\alpha}
\\&+ \sum_{j=1}^r \sum_{k=0}^{\min(j-1,m-1)} c_{k,j} b_{m-k,j} x^j q^{mjN} \Bigg) (-1)^{m+1} f(xq^{mN}).
\end{aligned}
\end{equation}
\end{lemma}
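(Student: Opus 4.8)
The plan is to translate the coefficient recurrence \eqref{recbeta} into the $q$-difference equation \eqref{eqf} via the standard generating-function dictionary: multiplying $\beta_\ell$ by $x^\ell$ and summing over $\ell$ builds $f$; an index shift $\beta_\ell\mapsto\beta_{\ell-j}$ contributes a factor $x^j$; and a factor $q^{h\ell N}$ rescales the argument of $f$ from $x$ to $xq^{hN}$. Adopting the convention $\beta_p=0$ for $p<0$, consistent with $f(x)=\sum_{n\ge0}\beta_n x^n$, the one computation used repeatedly is
\[
\sum_{\ell\ge1} q^{h\ell N}\,\beta_{\ell-j}\,x^\ell = x^j q^{hjN}\, f\!\left(xq^{hN}\right),
\]
valid for $j\ge1$ by the substitution $p=\ell-j$, the terms with $\ell<j$ dropping out by the convention. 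Every series here converges absolutely for $|q|,|d|<1$ and $x$ near $0$, so the rearrangements are legitimate; equivalently, one may read all identities as equalities of formal power series in $x$, which is what makes the equivalence an exact one.

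First I would observe that $f(0)=\beta_0$, so the normalisations $\beta_0=1$ and $f(0)=1$ are the same condition and may be set aside. For the substance I would multiply \eqref{recbeta} by $x^\ell$ and sum over $\ell\ge1$. Writing $\sigma_n:=\sum_{\alpha<a(r),\,w(\alpha)=n}q^{-\alpha}$ and $\gamma_j:=d^{j-1}\sigma_{j-1}+d^{j}\sigma_{j}$ for the coefficients in the left-hand bracket of \eqref{recbeta}, the term $\beta_\ell$ yields $f(x)-\beta_0$ and each $\gamma_j(-1)^jq^{j\ell N}\beta_\ell$ yields $\gamma_j(-1)^j\bigl(f(xq^{jN})-\beta_0\bigr)$; on the right, $\beta_{\ell-1}$ yields $xf(x)$ and, by the displayed identity, each triple-sum term $c_{k,j}b_{h-k,j}(-1)^{h+1}q^{h\ell N}\beta_{\ell-j}$ yields $c_{k,j}b_{h-k,j}(-1)^{h+1}x^jq^{hjN}f(xq^{hN})$. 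Bringing $xf(x)$ to the left to form $(1-x)f(x)$, sending the $\gamma_j(-1)^jf(xq^{jN})$ terms to the right (where they become $\gamma_j(-1)^{j+1}f(xq^{jN})$), and collecting the common factor $(-1)^{m+1}f(xq^{mN})$ — with $m=j$ for these diagonal terms and $m=h$ for the triple sum — reproduces the right-hand side of \eqref{eqf}, save for one leftover constant $\beta_0\bigl(1+\sum_{j=1}^r(-1)^j\gamma_j\bigr)$ coming from the lower limit $\ell=1$, which I address next.

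The only step that is not pure bookkeeping, and the one I expect to be the main obstacle, is to show that this leftover constant vanishes, i.e.\ that $\sum_{j=1}^r(-1)^j\gamma_j=-1$. I would prove this by telescoping: reindexing the first piece of $\gamma_j$ by $p=j-1$ and aligning on the monomial $d^p\sigma_p$, its general term $(-1)^{p+1}d^p\sigma_p$ cancels the term $(-1)^{p}d^p\sigma_p$ of the second piece for every $1\le p\le r-1$, leaving only the two boundary contributions $-\sigma_0=-1$ at $p=0$ and $(-1)^rd^r\sigma_r=0$ at $p=r$. Here the conventions $\sigma_0=1$ and $\sigma_n=0$ for $n\ge r$ fixed in the proof of Lemma~\ref{lemmau} are precisely what make the telescoping close at both ends. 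With the leftover constant gone, the computation of the previous paragraph identifies \eqref{eqf} with the family of coefficient identities \eqref{recbeta} for all $\ell\ge1$, the coefficient of $x^0$ reducing to the tautology $f(0)=f(0)$ by the same telescoping. Since $(\beta_m)\mapsto f$ is a bijection between sequences and formal power series, both implications of the lemma then follow simultaneously.
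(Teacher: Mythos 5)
Your proof is correct and follows essentially the same route as the paper's: multiply \eqref{recbeta} by $x^\ell$, sum over $\ell$, and read off the $q$-difference equation term by term, which the paper compresses into ``by the definition of $f$ and \eqref{recbeta}'' followed by relabelling and factorising. The one point you make explicit that the paper leaves silent --- that the leftover constant $\beta_0\bigl(1+\sum_{j=1}^r(-1)^j\gamma_j\bigr)$ vanishes because your $\gamma_j$'s telescope under the conventions $\sigma_0=1$ and $\sigma_n=0$ for $n\ge r$ --- is verified correctly and is genuinely needed for the two sides to match at the constant term, so this is a faithful (indeed slightly more careful) rendering of the paper's argument.
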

\begin{proof}
By the definition of $f$ and~\eqref{recbeta}, we have
\begin{align*}
(1-x) f(x) =& \sum_{j=1}^{r} \left( d^{j-1} \sum_{\substack{\alpha < a(r) \\ w(\alpha)=j-1}} q^{-\alpha} + d^{j} \sum_{\substack{\alpha < a(r) \\ w(\alpha)=j}} q^{-\alpha} \right) (-1)^{j+1} f\left(xq^{jN}\right)
\\ &+  \sum_{j=1}^r \sum_{h=1}^r \sum_{k=0}^{\min(j-1,h-1)} c_{k,j} b_{h-k,j} (-1)^{h+1} x^j q^{hjN} f\left(xq^{hN}\right).
\end{align*}
Relabelling the summation indices and factorising leads to~\eqref{eqf}. Moreover, $f(0)=\beta_0=1$. This completes the proof.
\end{proof}

Let us now do some transformations starting from $(\mathrm{rec}_{N,r-1})$.

\begin{lemma}
\label{lemmasn}
Let $(\mu_n)$ and $(s_n)$ be two sequences such that for all $n$,
$$s_n:= \mu_n \prod_{k=1}^{n} \frac{1}{1-q^{Nk}}.$$
Then $(\mu_n)$ satisfies $(\mathrm{rec}_{N,r-1})$ and the initial condition $\mu_0=1$ if and only if $s_0=1$ and $(s_n)$ satisfies the following recurrence equation
\begin{equation}
\label{recsn}
\tag{$\mathrm{rec}''_{N,r-1}$}
\begin{aligned}
&\left(1 + \sum_{j=1}^{r} \left( d^{j-1} \sum_{\substack{ \alpha < a(r) \\ w(\alpha)=j-1}} q^{-\alpha} +d^j \sum_{\substack{ \alpha < a(r) \\ w(\alpha)=j}} q^{-\alpha} \right) (-1)^j q^{j\ell N} \right) s_{\ell} = s_{\ell-1}
\\&+ \sum_{j=1}^r \sum_{m=1}^{r-j} \left(d^{m-1} \sum_{\substack{ \alpha < a(r) \\ w(\alpha)=j+m-1}} q^{-\alpha} +d^m \sum_{\substack{ \alpha < a(r) \\ w(\alpha)=j+m}} q^{-\alpha} \right)
\\& \qquad \qquad \quad \times {j+m-1 \brack m-1}_{q^{-N}} (-1)^{m+1} q^{m\ell N} s_{\ell-j}.
\end{aligned}
\end{equation}
\end{lemma}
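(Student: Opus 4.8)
The plan is to prove the equivalence by substituting the definition of $(s_n)$ directly into $(\mathrm{rec}_{N,r-1})$ and checking that it is carried \emph{exactly} into \eqref{recsn}. Because the factors $1-q^{Nk}$ are nonzero for $|q|<1$, the relation $\mu_n = s_n\prod_{k=1}^{n}(1-q^{Nk})$ is an invertible change of sequences, so it suffices to transform one recurrence into the other and the ``if and only if'' follows at once. The initial condition is immediate, the empty product giving $s_0=\mu_0$, hence $\mu_0=1\iff s_0=1$.

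First I would write $(\mathrm{rec}_{N,r-1})$, that is $(\mathrm{rec}_{N,r})$ with $r$ replaced by $r-1$ (so $a(r+1)$ becomes $a(r)$ and every weight sum runs over $\alpha<a(r)$), and substitute $\mu_{\ell-j}=s_{\ell-j}\prod_{k=1}^{\ell-j}(1-q^{Nk})$. The key simplification is that the factor $\prod_{h=1}^{j-1}(1-q^{(\ell-h)N})$ attached to $\mu_{\ell-j}$ telescopes with the product coming from $s_{\ell-j}$: the substitution $k=\ell-h$ gives $\prod_{h=1}^{j-1}(1-q^{(\ell-h)N})=\prod_{k=\ell-j+1}^{\ell-1}(1-q^{kN})$, so that
\[
\prod_{k=1}^{\ell-j}(1-q^{Nk})\prod_{h=1}^{j-1}(1-q^{(\ell-h)N})=\prod_{k=1}^{\ell-1}(1-q^{Nk}),
\]
\emph{independently of $j$}. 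The same product multiplies $s_{\ell-1}$, while the left-hand side carries $\prod_{k=1}^{\ell}(1-q^{Nk})=(1-q^{\ell N})\prod_{k=1}^{\ell-1}(1-q^{Nk})$. Dividing through by the common factor $\prod_{k=1}^{\ell-1}(1-q^{Nk})$ leaves $(1-q^{\ell N})\prod_{j=1}^{r-1}(1-dq^{\ell N-a(j)})\,s_{\ell}$ on the left and $s_{\ell-1}$ plus the unchanged inner coefficients times $s_{\ell-j}$ on the right.

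Next I would match both sides with \eqref{recsn}. For the left coefficient, expanding $\prod_{j=1}^{r-1}(1-dq^{\ell N-a(j)})$ over subsets of $\{a(1),\dots,a(r-1)\}$ and using the bijection between these subsets and the $\alpha\in A'$ with $\alpha<a(r)$ (valid since $\sum_{i=1}^{r-1}a(i)<a(r)$) gives $\sum_{i=0}^{r-1}(-d)^iq^{i\ell N}\sum_{w(\alpha)=i,\,\alpha<a(r)}q^{-\alpha}$; multiplying by $1-q^{\ell N}$ and collecting powers of $q^{\ell N}$ reproduces the coefficient of $s_\ell$ in \eqref{recsn}, exactly as in the reformulation step of Lemma~\ref{lemmau} (with the same conventions on the empty and out-of-range weight sums). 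For the right-hand side I would collect the coefficient of $s_{\ell-j}$ by the power $q^{p\ell N}$: after absorbing the prefactor $q^{\ell N-\alpha}$, the first piece of $(\mathrm{rec}_{N,r-1})$ sits at power $p=m$ and contributes $(-1)^{p-1}d^{p}{j+p-1\brack p-1}_{q^{-N}}\sum_{w(\alpha)=j+p}q^{-\alpha}$, whereas the second piece sits at power $p=m+1$, so with $m=p-1$ its binomial ${j+m\brack m}_{q^{-N}}$ becomes ${j+p-1\brack p-1}_{q^{-N}}$ and it contributes $(-1)^{p-1}d^{p-1}{j+p-1\brack p-1}_{q^{-N}}\sum_{w(\alpha)=j+p-1}q^{-\alpha}$. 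The two contributions share the single factor $(-1)^{p-1}{j+p-1\brack p-1}_{q^{-N}}q^{p\ell N}$ and combine into the bracket $d^{p-1}\sum_{w(\alpha)=j+p-1}q^{-\alpha}+d^{p}\sum_{w(\alpha)=j+p}q^{-\alpha}$, which (using $(-1)^{p-1}=(-1)^{p+1}$ and the range conventions, under which the $j=r$ summand and all out-of-range weight sums vanish) is precisely the coefficient of $s_{\ell-j}$ in \eqref{recsn}.

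The step I expect to require the most care is this reindexing on the right: one must notice that the two a priori different $q$-binomials ${j+m-1\brack m-1}_{q^{-N}}$ and ${j+m\brack m}_{q^{-N}}$ arising from the two pieces of $(\mathrm{rec}_{N,r-1})$ coincide once collected at a common power of $q^{\ell N}$, via the shift $m\mapsto m-1$ in the second piece; this is exactly what lets the right-hand side collapse to the single-binomial form of \eqref{recsn}. Everything else is telescoping and bookkeeping, and, in contrast to Lemma~\ref{lemmau}, no appeal to the $q$-binomial theorem \eqref{identity} is needed here.
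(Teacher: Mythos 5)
Your proposal is correct and follows essentially the same route as the paper: substitute the definition of $(s_n)$ into $(\mathrm{rec}_{N,r-1})$, let the factor $\prod_{h=1}^{j-1}(1-q^{(\ell-h)N})$ telescope against the products $\prod_{k=1}^{n}(1-q^{Nk})$, divide by the common factor $\prod_{k=1}^{\ell-1}(1-q^{Nk})$, and then perform the same reformulation (expanding the left-hand product over $\alpha<a(r)$ and shifting $m\mapsto m-1$ in the second piece to merge the two $q$-binomials) that the paper invokes by reference to the proof of Lemma~\ref{lemmau}. Your version is simply more explicit about the telescoping and the reindexing, and your observation that the $q$-binomial theorem is not needed here is accurate, since the leftover product that required it in Lemma~\ref{lemmau} disappears entirely in this substitution.
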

\begin{proof}
Using the definition of $(s_n)$ and $(\mathrm{rec}_{N,r-1})$, we get
\begin{align*}
&(1-q^{\ell N}) \prod_{j=1}^{r-1} \left(1-dq^{\ell N-a(j)}\right) s_{\ell} = s_{\ell-1}
\\ + \sum_{j=1}^{r-1} &\left( \sum_{m=0}^{r-j-1} d^m \sum_{\substack{\alpha < a(r+1) \\ w(\alpha)=j+m}} q^{\ell N -\alpha} \left( (-1)^{m-1} q^{\ell (m-1) N} {j+m-1 \brack m-1}_{q^{-N}} \right. \right.
\\& \left. \vphantom{\sum_{\substack{\alpha < a(r) \\ w(\alpha)=j+m-1}}} \left. + (-1)^m q^{\ell m N} {j+m \brack m}_{q^{-N}} \right) \right) \prod_{h=1}^{j-1} \left(1-dq^{(\ell-h)N-a(r)}\right) s_{\ell-j}.
\end{align*}
Then, as in the proof of Lemma~\ref{lemmau}, this can be reformulated as~\eqref{recsn}, and $s_0= \mu_0=1.$
\end{proof}

Again, let us translate this into a recurrence equation on the generating function for $(s_n)$.

\begin{lemma}
\label{lemmaG}
Let $(s_n)$ be a sequence and $G$ be a function such that
$$G(x) := \sum_{n=0}^{\infty} s_n x^n.$$
Then $(s_n)$ satisfies $(\mathrm{rec}''_{N,r-1})$ and the initial condition $s_0=1$ if and only if $G(0)=1$ and $G$ satisfies the following $q$-difference equation
\begin{equation}
\label{eqG}
\tag{$\mathrm{eq''}_{N,r-1}$}
\begin{aligned}
\left(1-x\right)G(x) = \sum_{m=1}^{r} \sum_{j=0}^{r-1}& \left(d^{m-1} \sum_{\substack{\alpha < a(r) \\ w(\alpha)=j+m-1}} q^{-\alpha} + d^{m} \sum_{\substack{\alpha < a(r) \\ w(\alpha)=j+m}} q^{-\alpha} \right)
\\& \times {j+m-1 \brack m-1}_{q^{-N}} (-1)^{m+1} x^j q^{jmN} G\left(xq^{mN}\right).
\end{aligned}
\end{equation}
\end{lemma}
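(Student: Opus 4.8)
The plan is to follow the proof of Lemma~\ref{lemmaf} almost verbatim: I would multiply the recurrence~\eqref{recsn} by $x^\ell$, sum over $\ell$, and read off the resulting sums as values of $G$ at the dilated points $xq^{mN}$. The only facts needed are the two generating-function identities
\[
\sum_{\ell} s_\ell\, q^{j\ell N} x^\ell = G\!\left(xq^{jN}\right), \qquad \sum_{\ell} s_{\ell-j}\, q^{m\ell N} x^\ell = x^j q^{mjN} G\!\left(xq^{mN}\right),
\]
the second obtained from the shift $\ell \mapsto \ell-j$, together with $\sum_{\ell} s_{\ell-1} x^\ell = xG(x)$.

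Applying these to~\eqref{recsn}, the left-hand side yields $G(x)$ plus $\sum_{j=1}^{r}\bigl(d^{j-1}\sum_{w(\alpha)=j-1} q^{-\alpha} + d^{j}\sum_{w(\alpha)=j} q^{-\alpha}\bigr)(-1)^{j} G(xq^{jN})$, while the right-hand side yields $xG(x)$ plus the double sum $\sum_{j=1}^r\sum_{m=1}^{r-j}\bigl(\cdots\bigr){j+m-1 \brack m-1}_{q^{-N}}(-1)^{m+1} x^j q^{mjN} G(xq^{mN})$. I would then transpose the $G(xq^{jN})$ terms to the right-hand side, which flips $(-1)^j$ into $(-1)^{j+1}$, and collect $G(x)$ and $xG(x)$ into $(1-x)G(x)$ on the left.

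The one step that needs care is recognising the right-hand side as the single double sum in~\eqref{eqG}. The transposed terms are precisely the $j=0$ slice of that sum: setting $j=0$ makes ${m-1 \brack m-1}_{q^{-N}}=1$ and $x^0 q^{0}=1$, so they collapse to $\sum_{m=1}^r(\cdots)(-1)^{m+1}G(xq^{mN})$. The surviving double sum supplies the $j\geq 1$ slice after relabelling; here I would invoke the conventions $\sum_{w(\alpha)=n} q^{-\alpha}=0$ for $n\geq r$ (every $\alpha<a(r)$ having weight at most $r-1$) to check that the nominal ranges $1\le m\le r$, $1\le j\le r-1$ of~\eqref{eqG} reduce to the effective constraint $j+m\le r$, matching $\sum_{m=1}^{r-j}$.

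Finally $G(0)=s_0=1$ is immediate. Since every manipulation above is an equality of formal power series in $x$, extracting the coefficient of $x^\ell$ from~\eqref{eqG} recovers~\eqref{recsn} for $\ell\ge 1$ (the constant terms being automatically consistent), so both implications of the equivalence follow at once. I expect the index bookkeeping of the last paragraph to be the only real obstacle, and it is entirely parallel to the corresponding passage in Lemma~\ref{lemmaf}.
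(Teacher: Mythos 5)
Your proposal is correct and follows essentially the same route as the paper: multiply \eqref{recsn} by $x^\ell$, sum over $\ell$, transpose the $G(xq^{jN})$ terms to identify them as the $j=0$ slice of the double sum in \eqref{eqG}, and use the vanishing of $\sum_{w(\alpha)=n}q^{-\alpha}$ for $n\geq r$ to reconcile the index ranges. The paper's proof is just a terser version of exactly these steps.
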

\begin{proof}
By the definition of $G$ and~\eqref{recsn}, we have
\begin{align*}
\left(1-x\right) G(x) &= \sum_{m=1}^{r} \left( d^{m-1} \sum_{\substack{\alpha < a(r) \\ w(\alpha)=m-1}} q^{-\alpha} + d^{m} \sum_{\substack{\alpha < a(r) \\ w(\alpha)=m}} q^{-\alpha} \right) (-1)^{m+1} G\left(xq^{mN}\right)
\\ &+ \sum_{m=1}^{r-1} \sum_{j=1}^{r-1}  \left(d^{m-1} \sum_{\substack{\alpha < a(r) \\ w(\alpha)=j+m-1}} q^{-\alpha} + d^{m} \sum_{\substack{\alpha < a(r) \\ w(\alpha)=j+m}} q^{-\alpha} \right)
\\& \qquad \qquad \quad \times {j+m-1 \brack m-1}_{q^{-N}} (-1)^{m+1} x^j q^{jmN} G\left(xq^{mN}\right).
\end{align*}
As the summand of the second term equals $0$ when $m=r$, we can equivalently write that the second sum is taken on $m$ going from $1$ to $r$. Then we observe that the first term corresponds to $j=0$ in the second term, and factorising gives exactly~\eqref{eqG}. Moreover, $G(0)=s_0=1$. This completes the proof.
\end{proof}

Let us do a final transformation and obtain yet another $q$-difference equation.

\begin{lemma}
\label{lemmag}
Let $G$ and $g$ be two sequences such that
$$ g(x) := G(x) \prod_{k=1}^{\infty} \left( 1 +xq^{kN-a(r)} \right).$$
Then $G$ satisfies $(\mathrm{eq}''_{N,r-1})$ and the initial condition $G(0)=1$ if and only if $g(0)=1$ and $g$ satisfies the following $q$-difference equation
\begin{equation}
\label{eqg}
\tag{$\mathrm{eq'}_{N,r-1}$}
\begin{aligned}
\left(1-x\right) g(x) &= \sum_{m=1}^{r} \left( \sum_{\nu=0}^{r-1} \sum_{\mu=0}^{\min(m-1, \nu)} f_{m,\mu} e_{m,\nu - \mu}  x^{\nu} q^{\nu m N} \right.
\\&+ \left. q^{-a(r)} \sum_{\nu=1}^{r} \sum_{\mu=0}^{\min(m-1, \nu-1)} f_{m,\mu} e_{m,\nu - \mu -1} x^{\nu} q^{\nu m N}\right) (-1)^{m+1} g\left(xq^{mN}\right),
\end{aligned}
\end{equation}
where
$$e_{m,j} := \left(d^{m-1} \sum_{\substack{\alpha < a(r) \\ w(\alpha)=j+m-1}} q^{-\alpha} + d^{m} \sum_{\substack{\alpha < a(r) \\ w(\alpha)=j+m}} q^{-\alpha} \right) {j+m-1 \brack m-1}_{q^{-N}},$$
and
$$f_{m,k} := q^{-N \frac{k(k+1)}{2} -ka(r)} {m-1 \brack k}_{q^{-N}}.$$
\end{lemma}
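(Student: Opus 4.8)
The plan is to treat the passage from $G$ to $g=G\cdot P$, where $P(x):=\prod_{k=1}^{\infty}\left(1+xq^{kN-a(r)}\right)$, as an invertible change of unknown function. Since $P$ is a power series with $P(0)=1$ it is invertible in $\C[[x]]$, so $G$ and $g$ determine one another, and the initial conditions match at once because $g(0)=G(0)P(0)=G(0)$. It therefore suffices to show that, after substituting $G(x)=g(x)/P(x)$ and $G(xq^{mN})=g(xq^{mN})/P(xq^{mN})$ into $(\mathrm{eq}''_{N,r-1})$ and clearing the factor $P(x)$, the result is precisely $(\mathrm{eq'}_{N,r-1})$; the converse implication follows by reversing these reversible steps.

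The computational engine is the telescoping identity $P(x)/P(xq^{mN})=\prod_{k=1}^{m}\left(1+xq^{kN-a(r)}\right)$, which holds because shifting $x\mapsto xq^{mN}$ deletes exactly the first $m$ factors of $P$. Multiplying $(\mathrm{eq}''_{N,r-1})$ through by $P(x)$ thus turns the left side into $(1-x)g(x)$ and replaces the coefficient of $g(xq^{mN})$ by $\left(\sum_{j=0}^{r-1}e_{m,j}x^{j}q^{jmN}\right)\prod_{k=1}^{m}\left(1+xq^{kN-a(r)}\right)$, up to the sign $(-1)^{m+1}$. I would then expand the finite product using the $q$-binomial theorem~\eqref{identity} with $q$ replaced by $q^{-N}$, $n$ by $m$, and $t$ by $xq^{mN-a(r)}$ (writing the product as $\prod_{k=0}^{m-1}\left(1+xq^{mN-a(r)}(q^{-N})^{k}\right)$), which gives the coefficient of $x^{\mu}$ as $q^{-N\mu(\mu-1)/2}\,{m \brack \mu}_{q^{-N}}\,q^{\mu(mN-a(r))}$.

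The decisive step is to rewrite this coefficient so that the product-expansion and the $e_{m,j}$ combine into the two convolutions appearing in $(\mathrm{eq'}_{N,r-1})$. Applying the Pascal-type identity~\eqref{pascal1} with base $q^{-N}$, namely ${m \brack \mu}_{q^{-N}}=q^{-N\mu}{m-1 \brack \mu}_{q^{-N}}+{m-1 \brack \mu-1}_{q^{-N}}$, splits the coefficient of $x^{\mu}$ into two pieces, which after simplification equal $q^{\mu mN}f_{m,\mu}$ and $q^{\mu mN-a(r)}f_{m,\mu-1}$ respectively; that is, the coefficient of $x^{\mu}$ in the product is exactly $q^{\mu mN}\left(f_{m,\mu}+q^{-a(r)}f_{m,\mu-1}\right)$. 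Forming the Cauchy product with $\sum_{j}e_{m,j}x^{j}q^{jmN}$, collecting the coefficient of $x^{\nu}$, and reindexing $\mu\mapsto\mu+1$ in the piece carrying $q^{-a(r)}$ then produces precisely $q^{\nu mN}\left(\sum_{\mu}f_{m,\mu}e_{m,\nu-\mu}+q^{-a(r)}\sum_{\mu}f_{m,\mu}e_{m,\nu-\mu-1}\right)$, which is the summand of $(\mathrm{eq'}_{N,r-1})$.

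The main obstacle is purely bookkeeping rather than conceptual: one must carry the three nested exponents ($-N\mu(\mu-1)/2$, $\mu mN$, and $-\mu a(r)$) through the Pascal split without error, verify that the two resulting terms collapse cleanly to $f_{m,\mu}$ and $q^{-a(r)}f_{m,\mu-1}$, and then confirm that the ranges of summation agree. The $\min(m-1,\nu)$ and $\min(m-1,\nu-1)$ bounds arise because $f_{m,\mu}$ vanishes for $\mu>m-1$ while $e_{m,\nu-\mu}$ (resp.\ $e_{m,\nu-\mu-1}$) forces $\nu-\mu\geq 0$ (resp.\ $\nu-\mu-1\geq 0$), and the two outer ranges $0\le\nu\le r-1$ and $1\le\nu\le r$ come from the supports of $e_{m,\cdot}$ before and after the index shift. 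Once these bounds are checked the two equations coincide term by term, completing the equivalence.
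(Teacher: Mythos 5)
Your proof is correct and follows essentially the same route as the paper's: both rest on the telescoping identity $P(x)/P(xq^{mN})=\prod_{k=1}^{m}\left(1+xq^{kN-a(r)}\right)$ for $P(x)=\prod_{k=1}^{\infty}\left(1+xq^{kN-a(r)}\right)$, an application of the $q$-binomial theorem~\eqref{identity} with base $q^{-N}$, and a Cauchy product with the reindexing $\mu\mapsto\mu+1$ in the $q^{-a(r)}$ piece. The only cosmetic difference is that the paper first peels off the factor $\left(1+xq^{mN-a(r)}\right)$ and expands the remaining $m-1$ factors, whereas you expand all $m$ factors and then split ${m \brack \mu}_{q^{-N}}$ via the Pascal identity~\eqref{pascal1}; the two computations are term-for-term equivalent.
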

\begin{proof}
By definition of $g$, we have
\begin{align*}
\left(1-x\right) g(x) =& \sum_{m=1}^{r} \sum_{j=0}^{r-1} \left(d^{m-1} \sum_{\substack{\alpha < a(r) \\ w(\alpha)=j+m-1}} q^{-\alpha} + d^{m} \sum_{\substack{\alpha < a(r) \\ w(\alpha)=j+m}} q^{-\alpha} \right)
\\& \times {j+m-1 \brack m-1}_{q^{-N}} (-1)^{m+1} \prod_{k=1}^m \left(1 + xq^{kN-a(r)}\right) x^j q^{jmN} g\left(xq^{mN}\right).
\end{align*}
Furthermore
\begin{align*}
\prod_{k=1}^m \left(1 + xq^{kN-a(r)}\right) &= \prod_{k=0}^{m-1} \left(1 + xq^{(m-k)N-a(r)}\right)
\\&= \left(1 + xq^{mN-a(r)}\right) \prod_{k=1}^{m-1} \left(1 + xq^{(m-k)N-a(r)}\right)
\\&= \left(1 + xq^{mN-a(r)}\right) \sum_{k=0}^{m-1} x^k q^{kmN -N \frac{k(k+1)}{2} -ka(r)} {m-1 \brack k}_{q^{-N}},
\end{align*}
where the last equality follows from~\eqref{identity}.
Therefore
\begin{align*}
& \left(1-x\right) g(x) =
\\ & \sum_{m=1}^{r} \left(\sum_{j=0}^{r-1} \left(d^{m-1} \sum_{\substack{\alpha < a(r) \\ w(\alpha)=j+m-1}} q^{-\alpha} + d^{m} \sum_{\substack{\alpha < a(r) \\ w(\alpha)=j+m}} q^{-\alpha} \right) {j+m-1 \brack m-1}_{q^{-N}} x^j q^{jmN} \right.
\\& \left. \vphantom{\sum_{\substack{\alpha < a(r) \\ w(\alpha)=j+m-1}}}  \times  \left(1 + xq^{mN-a(r)}\right) \sum_{k=0}^{m-1} x^k q^{kmN -N \frac{k(k+1)}{2} -ka(r)} {m-1 \brack k}_{q^{-N}} \right) (-1)^{m+1} g\left(xq^{mN}\right)
\end{align*}

\begin{align*}
= \sum_{m=1}^{r} &\left[ \sum_{j=0}^{r-1} \left(d^{m-1} \sum_{\substack{\alpha < a(r) \\ w(\alpha)=j+m-1}} q^{-\alpha} + d^{m} \sum_{\substack{\alpha < a(r) \\ w(\alpha)=j+m}} q^{-\alpha} \right) {j+m-1 \brack m-1}_{q^{-N}} x^j q^{jmN} \right.
\\& \qquad \times \sum_{k=0}^{m-1} x^k q^{kmN -N \frac{k(k+1)}{2} -ka(r)} {m-1 \brack k}_{q^{-N}}
\\&+ \sum_{j=0}^{r-1} \left(d^{m-1} \sum_{\substack{\alpha < a(r) \\ w(\alpha)=j+m-1}} q^{-\alpha} + d^{m} \sum_{\substack{\alpha < a(r) \\ w(\alpha)=j+m}} q^{-\alpha} \right)
\\& \qquad \qquad \times q^{-a(r)} {j+m-1 \brack m-1}_{q^{-N}} x^{j+1} q^{(j+1)mN}
\\& \qquad  \times \left. \vphantom{\sum_{\substack{\alpha < a(r) \\ w(\alpha)=j+m-1}}} \sum_{k=0}^{m-1} x^k q^{kmN -N \frac{k(k+1)}{2}-ka(r)} {m-1 \brack k}_{q^{-N}} \right] (-1)^{m+1} g\left(xq^{mN}\right).
\end{align*}
Thus
\begin{align*}
\left(1-x\right) g(x) =& \sum_{m=1}^{r} \left(\sum_{j=0}^{r-1} e_{m,j} x^j q^{jmN} \sum_{k=0}^{m-1} f_{m,k} x^k q^{kmN} \right.
\\&+ \left. q^{-a(r)} \sum_{j=1}^{r} e_{m,j-1} x^j q^{jmN} \sum_{k=0}^{m-1} f_{m,k} x^k q^{kmN} \right) (-1)^{m+1} g\left(xq^{mN}\right).
\end{align*}
Rearranging leads to~\eqref{eqg}. As always, $g(0)=G(0)=1$. The lemma is proved.
\end{proof}

We now want to show that $f$ and $g$ are in fact equal.

\begin{lemma}
\label{equalfg}
Let $f$ and $g$ be defined as in Lemmas~\ref{lemmaf} and~\ref{lemmag}.
Then $f=g$.
\end{lemma}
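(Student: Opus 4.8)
The plan is to show that $f$ and $g$ are the unique normalised power-series solutions of one and the same $q$-difference equation. I would first record a uniqueness principle: if a formal power series $h(x)=\sum_{n\ge 0}h_nx^n$ with $h(0)=1$ satisfies an equation of the shape $(1-x)h(x)=\sum_{m=1}^r P_m(x)\,h(xq^{mN})$ with prescribed polynomial coefficients $P_m$, then the $h_n$ are determined recursively. Extracting the coefficient of $x^n$ gives $h_n-h_{n-1}=\bigl(\sum_m P_m(0)q^{nmN}\bigr)h_n+(\text{terms in }h_0,\dots,h_{n-1})$, so $h_n$ is determined as soon as the diagonal factor $D_n:=1-\sum_{m=1}^r P_m(0)q^{nmN}$ is invertible. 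In our situation $P_m(0)$ is, up to sign, $e_{m,0}$, and every summand of $\sum_m P_m(0)q^{nmN}$ carries a factor $q^{nmN-\alpha}$ with $\alpha<a(r)\le N$, hence has strictly positive order in $q$ since $nmN\ge N>\alpha$ for $n,m\ge 1$. Therefore $D_n=1+O(q)$ is a unit in $\mathbb{C}[[q]]$ for every $n\ge 1$, and the recursion has a unique solution. Applying this to \eqref{eqf} and to \eqref{eqg} (Lemmas~\ref{lemmaf} and~\ref{lemmag}), each of $f$ and $g$ is the unique normalised solution of its own equation.

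It therefore suffices to prove that \eqref{eqf} and \eqref{eqg} are literally the same $q$-difference equation, that is, that for every $m$ the polynomial multiplying $(-1)^{m+1}f(xq^{mN})$ in \eqref{eqf} coincides with the one multiplying $(-1)^{m+1}g(xq^{mN})$ in \eqref{eqg}. I would compare these two polynomials coefficient by coefficient in $x$. The constant terms agree at once: on both sides the coefficient of $x^0$ equals $e_{m,0}$, using ${m-1 \brack m-1}_{q^{-N}}=f_{m,0}=1$. For the coefficient of $x^j$ with $j\ge 1$, the \eqref{eqf}-side reads $\sum_{k}c_{k,j}\,b_{m-k,j}$, whereas the \eqref{eqg}-side reads $\sum_{\mu}f_{m,\mu}\,e_{m,j-\mu}+q^{-a(r)}\sum_{\mu}f_{m,\mu}\,e_{m,j-\mu-1}$.

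The bridge between the two is the decomposition of $A'$ recalled above: every $a(r)<\alpha<a(r+1)$ is uniquely $a(r)+\alpha'$ with $\alpha'<a(r)$ and $w(\alpha)=w(\alpha')+1$, whence
\[\sum_{\substack{\alpha<a(r+1)\\ w(\alpha)=s}}q^{-\alpha}=\sum_{\substack{\alpha<a(r)\\ w(\alpha)=s}}q^{-\alpha}+q^{-a(r)}\sum_{\substack{\alpha<a(r)\\ w(\alpha)=s-1}}q^{-\alpha}.\]
Inserting this into the definition of $b_{m-k,j}$ (whose sums run over $\alpha<a(r+1)$) rewrites each such coefficient as a combination, with weights $1$ and $q^{-a(r)}$, of coefficients with $\alpha<a(r)$ --- precisely the objects entering the $e_{m,j}$ on the $g$-side, which also explains the two separate sums (the plain one and the $q^{-a(r)}$-one) in \eqref{eqg}. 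After this substitution, matching the $x^j$-coefficients reduces to an identity purely among $q^{-N}$-binomial coefficients: one must show that the convolution $\sum_k c_{k,j}b_{m-k,j}$, carrying the product ${j-1 \brack k}_{q^{-N}}{j+m-k-1 \brack m-k-1}_{q^{-N}}$, reorganises into $\sum_\mu f_{m,\mu}e_{m,j-\mu}$ (and its shift), which carries ${m-1 \brack \mu}_{q^{-N}}{j-\mu+m-1 \brack m-1}_{q^{-N}}$.

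I expect this last reorganisation to be the main obstacle. The difficulty is that the two convolutions run over $q$-binomials with different top entries, so no term-by-term comparison is available; instead one must collect the contributions according to the monomials $d^{p}\bigl(\sum_{w(\alpha)=s,\,\alpha<a(r)}q^{-\alpha}\bigr)$ they produce and verify equality of the resulting $q$-binomial sums. The natural tools are the $q$-Pascal identities of Proposition~\ref{pascal} together with the $q$-binomial theorem~\eqref{identity} (equivalently, a $q$-Chu--Vandermonde convolution), used to telescope the $k$-sum into the $\mu$-sum. Once this $q$-binomial identity is established, the two coefficient polynomials coincide for every $m$, so \eqref{eqf} and \eqref{eqg} are one and the same equation, and the uniqueness principle of the first paragraph forces $f=g$.
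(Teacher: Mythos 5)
Your setup coincides with the paper's: reduce $f=g$ to the statement that \eqref{eqf} and \eqref{eqg} have identical coefficient polynomials, match the constant terms via $f_{m,0}e_{m,0}$, and convert the sums over $\alpha<a(r+1)$ inside $b_{m-k,j}$ into sums over $\alpha<a(r)$ by splitting off the summand $a(r)$ --- your displayed decomposition is exactly the paper's equation \eqref{eqcb}. Your explicit uniqueness argument (the diagonal factor $1-\sum_m P_m(0)q^{nmN}$ is $1+O(q)$ because every monomial is $q^{nmN-\alpha}$ with $\alpha<a(r)\leq N$, hence invertible) is a genuine improvement in rigour over the paper, which only says ``it is sufficient to show'' without justifying why equality of the two equations forces $f=g$.

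However, the proof is incomplete where it matters most: the identity $T_{m,j}:=\sum_k c_{k,j}b_{m-k,j}=\sum_\mu f_{m,\mu}e_{m,j-\mu}+q^{-a(r)}\sum_\mu f_{m,\mu}e_{m,j-\mu-1}=:T'_{m,j}$, which you yourself flag as ``the main obstacle,'' is only asserted to follow from ``the natural tools'' and is never carried out. This convolution identity is the technical core of the lemma and it does not fall out of a single application of $q$-Chu--Vandermonde; in particular, a nonvanishing boundary term appears that must be cancelled separately. The paper closes the gap in three steps: (i) the exchange identity ${m-1 \brack k}_{q^{-N}}{j+m-k-1 \brack m-1}_{q^{-N}}={j \brack k}_{q^{-N}}{j+m-k-1 \brack m-k-1}_{q^{-N}}$ (equation \eqref{equalityqbin}) puts the $g$-side products into the same shape as the $f$-side ones, at the cost of extracting the $k=j$ term of the first sum, which survives only when $j\leq m-1$; (ii) the $q$-Pascal identity \eqref{pascal2} applied to ${j \brack k}_{q^{-N}}$ and to ${j+m-k-2 \brack m-k-1}_{q^{-N}}$ splits $T'_{m,j}$ into a copy of $T_{m,j}$ (recognised via \eqref{eqcb}) plus two residual sums $X$; (iii) a shift $k\mapsto k+1$ shows $X$ telescopes to $-\chi(j\leq m-1)\,q^{-Nj(j+1)/2-ja(r)}{m-1 \brack m-j-1}_{q^{-N}}(\cdots)$, which exactly cancels the boundary term from step (i). Until this computation (or an equivalent one) is actually performed, the argument has a real hole, even though every reduction you do carry out is correct and agrees with the paper.
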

\begin{proof}
To prove the equality, it is sufficient to show that for every $1 \leq m \leq r,$ the coefficient of $(-1)^{m+1} f\left(xq^{mN}\right)$ in~\eqref{eqf} is the same as the coefficient of $(-1)^{m+1}g\left(xq^{mN}\right)$ in~\eqref{eqg}.
Let $m \in \lbrace 1,...,r \rbrace$ and
\begin{align*}
S_{m} &: = \left[ (-1)^{m+1} f\left(xq^{mN}\right)\right] (\mathrm{eq}_{N,r}) 
\\&= d^{m-1} \sum_{\substack{\alpha < a(r) \\ w(\alpha)=m-1}} q^{-\alpha} + d^{m} \sum_{\substack{\alpha < a(r) \\ w(\alpha)=m}} q^{-\alpha} + \sum_{j=1}^r \sum_{k=0}^{\min(j-1,m-1)} c_{k,j} b_{m-k,j} x^j q^{jmN}
\end{align*}
and
\begin{align*}
S'_{m} &: = \left[ (-1)^{m+1} g\left(xq^{mN}\right)\right] (\mathrm{eq'}_{N,r-1}) 
\\&= \sum_{\nu=0}^{r-1} \sum_{\mu=0}^{\min(m-1, \nu)} f_{m,\mu} e_{m,\nu - \mu} x^{\nu} q^{\nu m N}
\\&\quad + q^{-a(r)} \sum_{\nu=1}^{r} \sum_{\mu=0}^{\min(m-1, \nu-1)} f_{m,\mu} e_{m,\nu - \mu -1} x^{\nu}q^{\nu m N}
\\&= f_{m,0} e_{m,0} 
\\& \quad + \sum_{\nu=1}^{r} \left( \sum_{\mu=0}^{\min(m-1, \nu)} f_{m,\mu} e_{m,\nu - \mu} + q^{-a(r)} \sum_{\mu=0}^{\min(m-1, \nu-1)} f_{m,\mu} e_{m,\nu - \mu -1} \right) x^{\nu} q^{\nu mN},
\end{align*}
because $e_{m, r-\mu}=0$ for all $\mu$, as $\mu \leq m-1$ so the sums are over $\alpha$ such that $\alpha < a(r)$ and $w(\alpha) \geq r$, which is impossible.

Let us first notice that $$f_{m,0} e_{m,0} = d^{m-1} \sum_{\substack{\alpha < a(r) \\ w(\alpha)=m-1}} q^{-\alpha} + d^{m} \sum_{\substack{\alpha < a(r) \\ w(\alpha)=m}} q^{-\alpha}.$$
Now define
$$T_{m,j}:= \sum_{k=0}^{\min(j-1,m-1)} c_{k,j} b_{m-k,j},$$
and
$$T'_{m,j}:= \sum_{k=0}^{\min(m-1, j)} f_{m,k} e_{m,j-k} + q^{-a(r)} \sum_{k=0}^{\min(m-1,j-1)} f_{m,k} e_{m,j-k-1}.$$
The only thing left to do is to show that for every $1 \leq j \leq r$, $$T_{m,j}= T'_{m,j}.$$
We have
\begin{equation}
\label{eqcb}
\begin{aligned}
&c_{k,j} b_{m-k,j} 
\\&= q^{-N \frac{k(k+1)}{2}-k a(r)} {j-1 \brack k}_{q^{-N}} {j+m-k-1 \brack m-k-1}_{q^{-N}}
\\&\quad \times  \left( d^{m-1} \sum_{\substack{ \alpha < a(r+1) \\ w(\alpha)=j+m-k-1}} q^{-\alpha} +d^m \sum_{\substack{ \alpha < a(r+1) \\ w(\alpha)=j+m-k}} q^{-\alpha} \right)
\\&= \left( d^{m-1} \sum_{\substack{ \alpha < a(r) \\ w(\alpha)=j+m-k-1}} q^{-\alpha} +d^m \sum_{\substack{ \alpha < a(r) \\ w(\alpha)=j+m-k}} q^{-\alpha} \right) q^{-N \frac{k(k+1)}{2}-k a(r)} 
\\& \quad \times {j-1 \brack k}_{q^{-N}}  {j+m-k-1 \brack m-k-1}_{q^{-N}}
\\& +q^{-a(r)} \left( d^{m-1} \sum_{\substack{ \alpha < a(r) \\ w(\alpha)=j+m-k-2}} q^{-\alpha} +d^m \sum_{\substack{ \alpha < a(r) \\ w(\alpha)=j+m-k-1}} q^{-\alpha} \right) q^{-N \frac{k(k+1)}{2}-k a(r)} 
\\& \quad \times {j-1 \brack k}_{q^{-N}}  {j+m-k-1 \brack m-k-1}_{q^{-N}},
\end{aligned}
\end{equation}
in which the last equality follows from separating the sums over $\alpha$ according to whether $\alpha$ contains $a(r)$ as a summand or not.

We also have
\begin{equation}
\label{eqfe}
\begin{aligned}
f_{m,k} e_{m,j-k} &= q^{-N \frac{k(k+1)}{2}-k a(r)} {m-1 \brack k}_{q^{-N}} 
\\&\times \left( d^{m-1} \sum_{\substack{ \alpha < a(r) \\ w(\alpha)=j+m-k-1}} q^{-\alpha} +d^m \sum_{\substack{ \alpha < a(r) \\ w(\alpha)=j+m-k}} q^{-\alpha} \right) {j+m-k-1 \brack m-1}_{q^{-N}},
\end{aligned}
\end{equation}
and
\begin{equation}
\label{eqfe'}
\begin{aligned}
q^{-a(r)} &f_{m,k} e_{m,j-k-1} = q^{-N \frac{k(k+1)}{2}-(k+1) a(r)} {m-1 \brack k}_{q^{-N}}  
\\& \times \left( d^{m-1} \sum_{\substack{ \alpha < a(r) \\ w(\alpha)=j+m-k-2}} q^{-\alpha} +d^m \sum_{\substack{ \alpha < a(r) \\ w(\alpha)=j+m-k-1}} q^{-\alpha} \right) {j+m-k-2 \brack m-1}_{q^{-N}}.
\end{aligned}
\end{equation}

By a simple calculation using the definition of $q$-binomial coefficients, we get the following result
For all $j,k,m \in \N$,
\begin{equation}
\label{equalityqbin}
{m-1 \brack k}_{q^{-N}} {j+m-k-1 \brack m-1}_{q^{-N}} = {j \brack k}_{q^{-N}} {j+m-k-1 \brack m-k-1}_{q^{-N}}.
\end{equation}
Using~\eqref{equalityqbin}, we obtain
\begin{align*}
T'_{m,j}&= \chi( j \leq m-1) \ q^{-N \frac{j(j+1)}{2}-j a(r)}  d^{m-1} {m-1 \brack m-j-1}_{q^{-N}}
\\& \qquad \times \left(\sum_{\substack{ \alpha < a(r) \\ w(\alpha)=m-1}} q^{-\alpha} +d^m \sum_{\substack{ \alpha < a(r) \\ w(\alpha)=m}} q^{-\alpha} \right)
\\&+ \sum_{k=0}^{\min(m-1,j-1)} q^{-N \frac{k(k+1)}{2}-k a(r)} {j \brack k}_{q^{-N}} {j+m-k-1 \brack m-k-1}_{q^{-N}} 
\\& \qquad \times \left( d^{m-1} \sum_{\substack{ \alpha < a(r) \\ w(\alpha)=j+m-k-1}} q^{-\alpha} +d^m \sum_{\substack{ \alpha < a(r) \\ w(\alpha)=j+m-k}} q^{-\alpha} \right)
\\&+ \sum_{k=0}^{\min(m-1,j-1)} q^{-N \frac{k(k+1)}{2}-(k+1) a(r)} {j-1 \brack k}_{q^{-N}} {j+m-k-2 \brack m-k-1}_{q^{-N}} 
\\& \qquad \times \left( d^{m-1} \sum_{\substack{ \alpha < a(r) \\ w(\alpha)=j+m-k-2}} q^{-\alpha} +d^m \sum_{\substack{ \alpha < a(r) \\ w(\alpha)=j+m-k-1}} q^{-\alpha} \right) .
\end{align*}
By~\eqref{pascal2} of Lemma~\ref{pascal}, we have
$${j \brack k}_{q^{-N}} = {j-1 \brack k}_{q^{-N}} + q^{N(k-j)} {j-1 \brack k-1}_{q^{-N}},$$
$${j+m-k-2 \brack m-k-1}_{q^{-N}} = {j+m-k-1 \brack m-k-1}_{q^{-N}} - q^{-Nj} {j+m-k-2 \brack m-k-2}_{q^{-N}}.$$
This allows us to rewrite $T'_{m,j}$ as
\begin{align*}
T'_{m,j}&= \chi( j \leq m-1) \ q^{-N \frac{j(j+1)}{2}-j a(r)} {m-1 \brack m-j-1}_{q^{-N}}
\\& \qquad \times \left( d^{m-1} \sum_{\substack{ \alpha < a(r) \\ w(\alpha)=m-1}} q^{-\alpha} +d^m \sum_{\substack{ \alpha < a(r) \\ w(\alpha)=m}} q^{-\alpha} \right)
\\&+ \sum_{k=0}^{\min(m-1,j-1)} q^{-N \frac{k(k+1)}{2}-k a(r)} {j-1 \brack k}_{q^{-N}} {j+m-k-1 \brack m-k-1}_{q^{-N}}
\\& \qquad \times \left( d^{m-1} \sum_{\substack{ \alpha < a(r) \\ w(\alpha)=j+m-k-1}} q^{-\alpha} +d^m \sum_{\substack{ \alpha < a(r) \\ w(\alpha)=j+m-k}} q^{-\alpha} \right) 
\\&+ \sum_{k=0}^{\min(m-1,j-1)} q^{-N \frac{k(k+1)}{2}-k a(r)+ N(k-j)} {j-1 \brack k-1}_{q^{-N}} {j+m-k-1 \brack m-k-1}_{q^{-N}}
\\& \qquad \times \left( d^{m-1} \sum_{\substack{ \alpha < a(r) \\ w(\alpha)=j+m-k-1}} q^{-\alpha} +d^m \sum_{\substack{ \alpha < a(r) \\ w(\alpha)=j+m-k}} q^{-\alpha} \right) 
\\&+ \sum_{k=0}^{\min(m-1,j-1)} q^{-N \frac{k(k+1)}{2}-(k+1) a(r)} {j-1 \brack k}_{q^{-N}} {j+m-k-1 \brack m-k-1}_{q^{-N}}
\\& \qquad \times \left( d^{m-1} \sum_{\substack{ \alpha < a(r) \\ w(\alpha)=j+m-k-2}} q^{-\alpha} +d^m \sum_{\substack{ \alpha < a(r) \\ w(\alpha)=j+m-k-1}} q^{-\alpha} \right) 
\\&- \sum_{k=0}^{\min(m-2,j-1)} q^{-N \frac{k(k+1)}{2}-(k+1) a(r)-Nj} {j-1 \brack k}_{q^{-N}} {j+m-k-2 \brack m-k-2}_{q^{-N}}
\\& \qquad \times \left( d^{m-1} \sum_{\substack{ \alpha < a(r) \\ w(\alpha)=j+m-k-2}} q^{-\alpha} +d^m \sum_{\substack{ \alpha < a(r) \\ w(\alpha)=j+m-k-1}} q^{-\alpha} \right) .
\end{align*}
By~\eqref{eqcb}, the sum of the second and fourth term in the sum above is exactly equal to $T{m,j}$.
Let $X$ denote the sum of the third and fifth term. We now want to show that 
\begin{align*}
X +& \chi( j \leq m-1) \ q^{-N \frac{j(j+1)}{2}-j a(r)} {m-1 \brack m-j-1}_{q^{-N}}
\\&\times \left( d^{m-1} \sum_{\substack{ \alpha < a(r) \\ w(\alpha)=m-1}} q^{-\alpha} +d^m \sum_{\substack{ \alpha < a(r) \\ w(\alpha)=m}} q^{-\alpha} \right) =0.
\end{align*}
By the change of variable $k'=k+1$ in the fourth sum, we get
\begin{align*}
X &= \sum_{k=0}^{\min(m-1,j-1)} q^{-N \frac{k(k-1)}{2}-k a(r)- Nj} {j-1 \brack k-1}_{q^{-N}} {j+m-k-1 \brack m-k-1}_{q^{-N}}
\\& \qquad \times \left( d^{m-1} \sum_{\substack{ \alpha < a(r) \\ w(\alpha)=j+m-k-1}} q^{-\alpha} +d^m \sum_{\substack{ \alpha < a(r) \\ w(\alpha)=j+m-k}} q^{-\alpha} \right) 
\\&- \sum_{k=1}^{\min(m-1,j)} q^{-N \frac{k(k-1)}{2}-k a(r)-Nj} {j-1 \brack k-1}_{q^{-N}} {j+m-k-1 \brack m-k-1}_{q^{-N}}
\\& \qquad \times \left( d^{m-1} \sum_{\substack{ \alpha < a(r) \\ w(\alpha)=j+m-k-1}} q^{-\alpha} +d^m \sum_{\substack{ \alpha < a(r) \\ w(\alpha)=j+m-k}} q^{-\alpha} \right) 
\\&= \begin{cases}
\ 0,\  &\text{if}\ j \geq m,\\
\begin{aligned} &-q^{-N \frac{j(j+1)}{2}-j a(r)} {m-1 \brack m-j-1}_{q^{-N}} \\ &\quad \times \left( d^{m-1} \sum_{\substack{ \alpha < a(r) \\ w(\alpha)=m-1}} q^{-\alpha} +d^m \sum_{\substack{ \alpha < a(r) \\ w(\alpha)=m}} q^{-\alpha} \right) ,\end{aligned}\ &\text{otherwise}
\end{cases}
\\&= -\chi( j \leq m-1) \ q^{-N \frac{j(j+1)}{2}-j a(r)} {m-1 \brack m-j-1}_{q^{-N}} 
\\& \qquad \times \left( d^{m-1} \sum_{\substack{ \alpha < a(r) \\ w(\alpha)=m-1}} q^{-\alpha} +d^m \sum_{\substack{ \alpha < a(r) \\ w(\alpha)=m}} q^{-\alpha} \right) .
\end{align*}
This completes the proof.
\end{proof}

We can finally turn to the proof of Theorem~\ref{main}.

\begin{proof}[Proof of Theorem~\ref{main}]
Let us start by the initial case $r=1$. Let $N \geq a(1)$ and $(u_m)$ such that $u_0=1$ and
\begin{equation}
\tag{$\mathrm{rec}_{N,1}$}
\left(1-dq^{\ell N-a(1)}\right) u_{\ell} = \left(1 +q^{\ell N-a(1)}\right) u_{\ell-1}.
\end{equation}
Then
\begin{displaymath}
u_{\ell}= \frac{(-q^{N-a(1)};q^N)_{\ell}}{(dq^{N-a(1)};q^N)_{\ell}}.
\end{displaymath}
Taking the limit as $\ell$ goes to infinity gives the desired result.

Now assume that Theorem~\ref{main} is true for some $r-1 \geq 1$. We want to show that it is true for $r$ too.
Let $N \geq \alpha(2^r-1)$, and $(u_m)_{m \in \N}$ satisfying $(\mathrm{rec}_{N,r})$ and the initial condition $u_0=1$.
For all $m$, let
\begin{displaymath}
\beta_m:= u_m \prod_{j=1}^{m} \frac{1-dq^{jN-a(r)}}{1-q^{jN}}.
\end{displaymath}
Then $\beta_0=1$ and by Lemma~\ref{lemmau}, $(\beta_m)$ satisfies~\eqref{recbeta}.
Now let $$f(x):= \sum_{n=0}^{\infty} \beta_n x^n.$$
Then by Lemma~\ref{lemmaf}, $f(0)=1$ and $f$ satisfies~\eqref{eqf}.
But by Lemma~\ref{equalfg}, $f$ also satisfies~\eqref{eqg}.
Now let  $$ G(x):= \frac{f(x)}{\prod_{k=1}^{\infty} \left(1 + xq^{kN-a(r)}\right)}.$$
By Lemma~\ref{lemmag}, $G(0)=1$ and $G$ satisfies~\eqref{eqG}.
Let $$G(x) =: \sum_{n=0}^{\infty} s_n x^n.$$
By Lemma~\ref{lemmaG}, $s_0=1$ and $(s_n)$ satisfies~\eqref{recsn}.
Finally let $$\mu_n:= s_n \prod_{k=1}^{n} \left(1-q^{Nk}\right).$$
By Lemma~\ref{lemmasn}, $\mu_0=1$ and $(\mu_n)$ satisfies $(\mathrm{rec}_{N,r-1})$.
Now $N$ is still larger than $\alpha \left( 2^{r-1}-1 \right)$ and we can use the induction hypothesis which gives
\begin{equation}
\label{g1}
\lim\limits_{\ell \rightarrow \infty}\mu_{\ell} = \prod_{k=1}^{r-1} \frac{(-q^{N-a(k)};q^N)_{\infty}}{(dq^{N-a(k)};q^N)_{\infty}}..
\end{equation}
Therefore by definition of $(s_{\ell})$,
\begin{displaymath}
\lim\limits_{\ell \rightarrow \infty}s_{\ell} = \frac{1}{(q^N;q^N)_{\infty}} \prod_{k=1}^{r-1} \frac{(-q^{N-a(k)};q^N)_{\infty}}{(dq^{N-a(k)};q^N)_{\infty}}.
\end{displaymath}
We have
\begin{equation}
\label{truc}
\sum_{m=0}^{\infty} \beta_m x^m = f(x) = \prod_{k=1}^{\infty} \left(1 + xq^{kN-a(r)}\right) G(x) = \prod_{k=1}^{\infty} \left(1 + xq^{kN-a(r)}\right) \sum_{m=0}^{\infty} s_m x^m.
\end{equation}
We multiply both sides of \eqref{truc} by $(1-x)$ and we apply Appell's Comparison Theorem~\cite[p. 101]{Appell}. We obtain
\begin{displaymath}
\lim\limits_{\ell \rightarrow \infty} \beta_{\ell}= \prod_{k=1}^{\infty} \left(1 + q^{kN-a(r)}\right) \lim\limits_{\ell \rightarrow \infty}s_{\ell} = \frac{(-q^{N-a(r)};q^N)_{\infty}}{(q^N;q^N)_{\infty}} \prod_{k=1}^{r-1}\frac{(-q^{N-a(k)};q^N)_{\infty}}{(dq^{N-a(k)};q^N)_{\infty}}.
\end{displaymath}
Thus by definition of $(\beta_{\ell})$, we have
\begin{displaymath}
\lim\limits_{\ell \rightarrow \infty} u_{\ell}=\prod_{j=1}^{\infty} \frac{1-q^{jN}}{1-dq^{jN-a(r)}} \lim\limits_{\ell \rightarrow \infty}\beta_{\ell} = \frac{(-q^{N-a(r)};q^N)_{\infty}}{(dq^{N-a(r)};q^N)_{\infty}} \prod_{k=1}^{r-1}\frac{(-q^{N-a(k)};q^N)_{\infty}}{(dq^{N-a(k)};q^N)_{\infty}}.
\end{displaymath}
Theorem~\ref{main} is proved.
\end{proof}

Now Theorem~\ref{dousse2} is a simple corollary of Theorem~\ref{main}.

\begin{proof}[Proof of Theorem~\ref{dousse2}]
We have that $\lim\limits_{\ell \rightarrow \infty} u_{\ell}$, which is the generating function for partitions counted by $G(-A'_N;k,n)$, is equal to $\prod_{k=1}^r \frac{(-q^{N-a(k)};q^N)_{\infty}}{(dq^{N-a(k)};q^N)_{\infty}},$ which is the generating function for partitions counted by $F(-A_N;k,n)$.
Thus $$F(-A_N;n,k)= G(-A'_N;n,k),$$ and Theorem~\ref{dousse2} is proved.
\end{proof}

\section{Conclusion}
In~\cite{Corteel}, Corteel and Lovejoy proved an even more general theorem of which both of Andrews' theorems are particular cases. It would be interesting to generalise it to overpartitions too, but new techniques might be necessary.

It would also be interesting to see if Theorems~\ref{dousse2} and~\ref{dousse} have connections with representation theory and quantum algebra like Theorems~\ref{andrews2} and~\ref{andrews}.

\section*{Acknowledgements}
The author would like to thank Jeremy Lovejoy for carefully reading a preliminary version of this article and giving her helpful advice to improve it.

\bibliographystyle{alpha}

\bibliography{references}   

\end{document}